\documentclass[12pt,final]{article}
\usepackage{amsmath,amsthm,amsfonts,amssymb,graphicx,hyperref,enumerate,psfrag,textgreek}
\usepackage{fullpage}

\usepackage[utf8]{inputenc} 

\newtheorem{lemma}{Lemma}

\newtheorem{remark}{Remark}
\newtheorem{proposition}{Proposition}
\newtheorem{theorem}{Theorem}

\newtheorem{conjecture}[lemma]{Conjecture}

\newtheorem{corollary}{Corollary}

\newcommand{\vertiii}[1]{{\left\vert\kern-0.25ex\left\vert\kern-0.25ex\left\vert #1 
    \right\vert\kern-0.25ex\right\vert\kern-0.25ex\right\vert}}
\newcommand{\EE}{{\mathbb{E}}}

\newcommand{\PP}{\mathbb{P}}
\newcommand{\Z}{\mathbb {Z}}

\newcommand{\R}{\mathbb {R}}

\newcommand{\cI}{\mathcal {I}}
\newcommand{\cJ}{\mathcal {J}}

\newcommand{\cE}{\mathcal {E}}

\newcommand{\cL}{\mathcal {L}}
\newcommand{\cP}{\mathcal {P}}

\newcommand{\GG}{{\mathbb G}}

\newcommand{\cK}{\mathcal{K}}

\newcommand{\supp}{ \mathrm{supp}}

 %
 %
\newcommand{\dtv}{d_{\textsc{tv}}}

\newcommand{\mix}{t_{\textsc{mix}}}

\newcommand{\IP}{{\textsc{ip}}}
\newcommand{\RW}{{\textsc{rw}}}
\newcommand{\EX}{{\textsc{ex-}k}}
\newcommand{\fS}{\mathfrak{S}}
\newcommand{\tmix}{t_{\textsc{mix}}}
\newcommand{\trel}{t_{\textsc{rel}}}
\newcommand{\comp}{\chi}

\newcommand{\cay}{\textrm{Cay}}
\setcounter{tocdepth}{2}
\title{The interchange process on high-dimensional products}
\author{Jonathan Hermon, Justin Salez}


\begin{document}

\maketitle
\begin{abstract}
We resolve a long-standing conjecture of Wilson (2004), reiterated by Oliveira (2016), asserting that the mixing time of the Interchange Process with unit edge rates on the $n$-dimensional hypercube is of order $n$. This follows from a sharp inequality established at the level of Dirichlet forms, from which we also deduce that macroscopic cycles emerge in constant time, and that the log-Sobolev constant of the exclusion process is of order $1$. Beyond the hypercube, our results apply to cartesian products of arbitrary graphs of fixed size, shedding light on a broad conjecture of Oliveira (2013). 
\end{abstract}

\vspace{-0.5cm}
\tableofcontents

\newpage
\section{Introduction}
\subsection{Interchange process}
Let $G=(V_G,E_G)$ be a finite undirected connected graph. The \emph{interchange process} (\IP) on $G$ is the continuous-time random walk $(\xi_t)_{t\ge 0}$  on the symmetric group $\fS(V_G)$ with initial condition $\xi_0=\textrm{id}$ and the following Markov generator: for all observables $f\colon \fS(V_G)\to\R$,
\begin{eqnarray}
\cL^{\IP}_G f(\sigma) & := & \sum_{e\in E_G}\left(f(\sigma\tau_e)-f(\sigma)\right),
\end{eqnarray}
 where $\tau_e$ denotes the transposition of the endpoints of the edge $e$. One may think of each vertex as carrying a labelled particle, and of the edges as being equipped with independent unit-rate Poisson clocks: whenever a clock rings, the particles sitting at the endpoints of the corresponding edge simply exchange their positions.  

Since $\cL^{\IP}_G$ is symmetric and irreducible, the law of $\xi_t$ converges to that of a uniform permutation $\xi_\star$ as $t\to\infty$. We shall here be interested in the time-scale on which this convergence occurs, as traditionally measured
by the  total-variation mixing time:
\begin{eqnarray}
\label{tmix}
\tmix^{\IP}(G) & := & \min\left\{t\ge 0\colon \max_{A\subseteq \fS(V_G)}\left|\PP\left(\xi_t\in A\right)-\PP(\xi_\star\in A)\right|\le \frac 1e\right\}.
\end{eqnarray}
Understanding the relation between this fundamental quantity and the geometry of $G$ is a challenging problem, to which a remarkable variety of tools have been  applied: representation theory \cite{DS,MR964069}, couplings \cite{levin, Wilson,aldous,olive},  eigenvectors \cite{Wilson,MR3069380}, functional inequalities \cite{LY,Yau,Caputo}, comparison methods \cite{comparison,AK}, etc. 
 The question is of course particularly meaningful when the number of states becomes large, and one is thus naturally led to study  asymptotics  along various growing sequences of graphs $(G_n)_{n\ge 1}$. 

The case of the $n-$clique $G_n=\cK_n$  has been extensively studied under the name \emph{random transposition shuffle}. In particular,  Diaconis and Shahshahani \cite{DS} proved that 
\begin{eqnarray}
\tmix^{\IP}(\cK_n) & = & \frac{\log n}{n}\left(1+o(1)\right).
\end{eqnarray}
In fact, this was shown for any precision $\varepsilon\in(0,1)$ instead of $\frac{1}{e}$ in (\ref{tmix}), thereby extablishing the very first instance of what is now called a \emph{cutoff phenomenon} \cite{MR1374011}. Another well-understood case  is  the $n-$path $\cP_n$, for which Lacoin \cite{lacoin}  recently proved cutoff at time  
\begin{eqnarray}
\tmix^{\IP}(\cP_n) & = & \frac{n^2 \log n}{2\pi^2}\left(1+o(1)\right).
\end{eqnarray}

There are, however, many simple graph sequences along which even the order of magnitude of $\tmix^{\IP}(G_n)$ is unknown. An emblematic example (which was the initial motivation for our work) is the boolean hypercube $\Z_2^n$, for which Wilson \cite{Wilson} conjectured in $2004$ that 
\begin{eqnarray}
\label{conj:wilson}
\tmix^{\IP}(\Z_2^n) & \asymp & n.
\end{eqnarray}
This was reiterated as Problem 4.2 of the  AIM workshop \emph{Markov Chains Mixing Times} \cite{Oliveaim}. 
Here and throughout the paper, $\asymp$ and $\lesssim$ denote  equality and inequality up to \underline{universal} positive multiplicative constants.  
The current best estimates  are
\begin{eqnarray}
n \ \lesssim  & \tmix^{\IP}(\Z_2^n) & \lesssim \ n\log n.
\end{eqnarray}
The lower bound is due to Wilson \cite[Section 9.1]{Wilson}, and the upper bound was recently obtained by Alon and Kozma \cite[Corollary 10]{AK} as a special case of a much more general estimate which we will now discuss.

\subsection{The big picture} An important observation about the \IP\ 
 is that the motion of a single particle is itself a Markov process. The generator is the usual graph Laplacian, which acts on functions $f\colon V_G\to\R$ by
\begin{eqnarray}
\label{def:RW}
\cL^{\RW}_G f(x) & := & \sum_{y\colon \{x,y\}\in E_G}\left(f(y)-f(x)\right).
\end{eqnarray}
It is   natural to expect the mixing properties of $\cL^{\IP}_G$ and $\cL^{\RW}_G$ to be intimately related. Indeed, a celebrated conjecture of Aldous, now resolved by Caputo, Liggett and Richthammer \cite{Caputo}, asserts that the \emph{relaxation times} (inverse spectral gaps) of these two operators coincide:
\begin{eqnarray}
\label{Caputo}
\trel^{\IP}(G) & = & \trel^{\RW}(G).
\end{eqnarray}
Recall that $\trel^{\RW}(G)$ classically  controls the mixing time $\tmix^{\RW}(G)$ of the single-particle dynamics (\ref{def:RW}), up to a correction which is only logarithmic in the number of vertices:
\begin{eqnarray}
\tmix^{\RW}(G) & \lesssim &  \trel^{\RW}(G)\log{|V_G|}. 
\end{eqnarray}
Inspired by the identity (\ref{Caputo}), Oliveira \cite{olive} conjectured that  the same control applies to $\tmix^{\IP}(G)$. More precisely, he proposed the following simple-looking but far-reaching estimate, which is sharp in the three very different graph examples mentioned above (see Table \ref{table}). 
\begin{conjecture}[Oliveira \cite{olive}]\label{conj:main} For any connected graph $G$,
\begin{eqnarray}
\label{eq:main}
\tmix^{\IP}(G) & \lesssim &  \trel^{\RW}(G)\log |V_G|. 
\end{eqnarray}
\end{conjecture}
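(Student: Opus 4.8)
The plan is to pass to the $L^2$ world and exploit the group structure of the state space $\fS(V_G)$. Write $N:=|V_G|$ and let $\xi_\star$ be uniform on $\fS(V_G)$. By Cauchy--Schwarz, $4\,\dtv\bigl(\PP(\xi_t\in\cdot),\PP(\xi_\star\in\cdot)\bigr)^2\le\chi^2(t):=N!\sum_{\sigma}\bigl(\PP(\xi_t=\sigma)-\tfrac1{N!}\bigr)^2$, so it suffices to make the chi-squared distance small at some time $t\asymp\trel^{\RW}(G)\log N$. Now $\cL^{\IP}_G$ acts on observables by right translation along the (non conjugation-invariant) step set $\{\tau_e:e\in E_G\}$, hence commutes with the left regular representation, and its spectrum decomposes along the irreducible representations $\rho_\lambda$ of $\fS(V_G)$, indexed by partitions $\lambda\vdash N$: on the $\rho_\lambda$-isotypic component, $\cL^{\IP}_G$ acts, up to unitary conjugacy and with multiplicity $\dim\rho_\lambda$, as $-\widehat{\cL}(\rho_\lambda)$, where $\widehat{\cL}(\rho_\lambda):=\sum_{e\in E_G}\bigl(I-\rho_\lambda(\tau_e)\bigr)\succeq 0$. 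Since the standard module $\rho_{(N-1,1)}=\{v\in\R^{V_G}:\sum_x v_x=0\}$ carries the coordinate-permutation action, $\widehat{\cL}(\rho_{(N-1,1)})$ is exactly $-\cL^{\RW}_G$ from (\ref{def:RW}); more generally, Plancherel gives the clean estimate
\[
\chi^2(t)\ =\ \sum_{\lambda\ne(N)}\dim\rho_\lambda\,\bigl\|e^{-t\widehat{\cL}(\rho_\lambda)}\bigr\|_{\mathrm{HS}}^2\ \le\ \sum_{\lambda\ne(N)}(\dim\rho_\lambda)^2\,e^{-2t\,\gamma_\lambda(G)},
\]
where $\gamma_\lambda(G)>0$ is the smallest eigenvalue of $\widehat{\cL}(\rho_\lambda)$, so that $\gamma_{(N-1,1)}(G)=\gap^{\RW}(G)=1/\trel^{\RW}(G)$.

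Two inputs would then close the argument, one routine and one hard. The routine one is combinatorial: by the Robinson--Schensted correspondence, $\sum_{\lambda:\,N-\lambda_1=k}(\dim\rho_\lambda)^2$ is the number of permutations of $N$ whose longest increasing subsequence has length exactly $N-k$, hence at most $\binom{N}{k}^2k!\le N^{2k}$. The hard one, which is the crux, is a \emph{quantitative} strengthening of the Aldous--Caputo--Liggett--Richthammer identity (\ref{Caputo}): I would aim to show that there is a universal constant $c>0$ such that
\[
\gamma_\lambda(G)\ \ge\ c\,\bigl(N-\lambda_1\bigr)\cdot\frac{1}{\trel^{\RW}(G)}\qquad\text{for every connected }G\text{ and every }\lambda\vdash N .
\]
The case $N-\lambda_1=1$ is precisely (\ref{Caputo}); the assertion is that representations ``farther'' from the trivial one relax faster, at the optimal linear rate in the number $N-\lambda_1$ of boxes below the first row --- exactly the rate that the preceding count can absorb. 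Granting it, $\chi^2(t)\le\sum_{k\ge1}N^{2k}e^{-2ckt/\trel^{\RW}(G)}$ is a geometric series in $N^{2}e^{-2ct/\trel^{\RW}(G)}$, which falls well below $1$ once $t\ge(2/c)\,\trel^{\RW}(G)\log N$; since $N=|V_G|$, this is (\ref{eq:main}).

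Thus the whole problem reduces to the displayed lower bound on $\gamma_\lambda(G)$, and I expect essentially all the difficulty to live there: in full generality it is a genuine strengthening of Aldous's spectral-gap conjecture, and the crude consequence $\gamma_\lambda(G)\ge\gap^{\RW}(G)$ of (\ref{Caputo}) alone is useless here, as it only yields mixing at time $\asymp N\log N/\gap^{\RW}(G)$. Two special cases look within reach and already cover the motivating examples. For hook shapes $\lambda=(N-k,1^k)$, $\rho_\lambda$ is the $k$-th exterior power of the standard module $\rho_{(N-1,1)}$, which can be realised inside the $k$-particle exclusion dynamics; here $\gamma_{(N-k,1^k)}(G)\gtrsim k\,\gap^{\RW}(G)$ should follow by a Dirichlet-form comparison with $k$ independent particles, whose lowest nontrivial eigenvalue on $\Lambda^k\bigl(\R^{V_G}\ominus\R\mathbf{1}\bigr)$ equals the sum of the $k$ smallest eigenvalues of $-\cL^{\RW}_G$ on $\R^{V_G}\ominus\R\mathbf{1}$ and hence is $\ge k\,\gap^{\RW}(G)$, the discrepancy being of the type tamed by the octopus inequality of \cite{Caputo}. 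For cartesian products $G=G_1\times\cdots\times G_m$ one has $\cL^{\RW}_G=\bigoplus_i\cL^{\RW}_{G_i}$, so $\trel^{\RW}(G)\asymp 1$ whenever the $G_i$ have bounded size, and I would look for a decomposition of the interchange Dirichlet form along the factors under which a bound on $\gamma_\lambda$ for each (bounded-size, hence trivial) factor propagates to $\gamma_\lambda(G)$ with the right linear dependence on $N-\lambda_1$. This product case is the one I would attack first; it contains both the hypercube $\Z_2^n$ (Wilson's conjecture (\ref{conj:wilson})) and Oliveira's broad family, and I would regard the case of an arbitrary graph as the principal question left open by this route.
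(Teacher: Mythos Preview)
The statement you are addressing is Conjecture~\ref{conj:main}, which the paper presents as an \emph{open} conjecture; there is no proof in the paper to compare against. What the paper actually establishes is the special case of cartesian products of bounded side-length (Theorem~\ref{th:main} and Corollary~\ref{co:mix}), by a route entirely different from yours: it bounds the comparison constant $\comp_G^{\IP}$ of (\ref{def:comp}) directly, showing $\comp_G^{\IP}\asymp_\ell|V_G|$ via canonical paths, iterated use of the octopus inequality (Lemma~\ref{lm:convo}), and a local-limit-theorem argument that controls the $t$-fold convolution $\mu^{\star t}$ at the carefully tuned scale $t\asymp n$ rather than $t\asymp n\log n$. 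The paper's own proposed reduction of the full conjecture is again a Dirichlet-form statement ($\comp_G^{\IP}\lesssim|V_G|\,\trel^{\RW}(G)$), not a representation-theoretic one.

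Your Plancherel set-up and the dimension count $\sum_{\lambda:\,N-\lambda_1=k}(\dim\rho_\lambda)^2\le N^{2k}$ are correct, and your reduction to $\gamma_\lambda(G)\ge c\,(N-\lambda_1)/\trel^{\RW}(G)$ is clean --- but that inequality \emph{is} the conjecture under another name: the only case known for arbitrary $G$ is $N-\lambda_1=1$, which is exactly (\ref{Caputo}), and you supply no mechanism beyond it. The hook-shape sketch is not a proof: comparing the $k$-particle exclusion Dirichlet form to that of $k$ independent walkers with a \emph{universal} constant is itself an open problem of the same strength, and the octopus inequality as stated does not yield it. More seriously, your product-graph plan (``decompose the interchange Dirichlet form along the factors'') cannot work as written, because $\fS(V_{G_1}\times\cdots\times V_{G_n})$ is vastly larger than $\prod_i\fS(V_{G_i})$ and $\cE^{\IP}$ does not tensorise over the factors; the paper's Sections~\ref{sec:cano}--\ref{sec:final} are precisely the substantial work that substitutes for this missing decomposition. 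In short, your proposal is a legitimate reformulation of the open problem, not progress on it.
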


Some partial progress on Conjecture \ref{conj:main} can be found in \cite{Pymar,Pymar2}. It is easy to prove that $t_{\mathrm{rel}}^{\mathrm{RW}} \log |V|$ is comparable up to some universal constants to the mixing time of $|V|$ independent particles \cite[\S2.1]{Pymar} and thus Oliveira's conjecture has the following probabilistic interpretation. It is saying that the mixing time of the interchange process is at most some universal constant multiple of the mixing time of $|V|$ independent particles (in fact, this is how it phrased in \cite{olive}). . We also note that under a mild spectral condition one has that $\tmix^{\IP}(G)  \gtrsim   \trel^{\RW}(G)\log |V_G|$ \cite[Theorem 1.4]{Pymar}, but that in general one can have that  $\tmix^{\IP}(G)$ is of smaller order than  $\trel^{\RW}(G)\log |V_G|$.\footnote{One such example can be obtained by taking $G_n$ to be the graph obtained by attaching a path of some diverging length $L_n$ to a clique of size $n$, where $ \log L_n =o(\log n)$. A similar example is analyzed in \cite{HK}, where it is shown that the order of the total variation mixing time of the interchange process may increase as a result of increasing some of the edge rates by a $1+o(1)$ multiplicative factor, or by adding a small number of edges to the base graph (in a manner that makes the original graph and the new graph quasi-isometric).}

One of the most powerful techniques to bound the mixing time of a complicated Markov chain consists in comparing its {Dirichlet form}  with that of a better understood chain having the same state space and stationary law, see the seminal paper by Diaconis and Saloff-Coste \cite{comparison}. In the case of $\IP$, the {Dirichlet form} is given by
\begin{eqnarray}
\cE^{\IP}_G\left(f\right) & := & \frac{1}{2|V_G|!}\sum_{\sigma\in\fS(V_G)}\sum_{e\in E_G}\left(f(\sigma\tau_e)-f(\sigma)\right)^2,
\end{eqnarray}
 and a natural candidate for the  comparison is  the \emph{mean-field} version $\cE^{\IP}_\cK$, where $\cK$ denotes the complete graph on the same vertex set as $G$. Let us therefore define the \emph{comparison constant} of the $\IP$ on $G$ as  the smallest number $\comp_G^\IP$  such that the inequality
 \begin{eqnarray}
 \label{def:comp}
\cE^{\IP}_{\cK}\left(f\right) & \le & \comp_G^\IP\,\cE^{\IP}_G\left(f\right) 
 \end{eqnarray}
holds for all $f\colon \fS(V_G) \to\R$. This constant is the optimal price to pay in order to systematically transfer quantitative estimates from $\IP$ on $\cK$ to $\IP$ on $G$. In a recent breakthrough, Alon and Kozma \cite[Theorem 1]{AK}  established the following remarkably general estimate.
\begin{theorem}[Alon and Kozma \cite{AK}]\label{th:AK}For any regular connected graph $G$,
\begin{eqnarray}
\comp^{\IP}_G & \lesssim & |V_G|\,\tmix^\RW(G).
\end{eqnarray}
\end{theorem}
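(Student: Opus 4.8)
The assertion is a comparison inequality between two Dirichlet forms on the same state space with the same (uniform) stationary law, so the natural tool is the method of Diaconis and Saloff-Coste: it suffices to \emph{route} every generating transposition of $\cL^\IP_\cK$ through the edge-transpositions of $\cL^\IP_G$. Concretely, for each ordered pair $x,y\in V_G$ I would choose a random simple path $\gamma_{xy}=(v_0,\dots,v_m)$ in $G$ from $x$ to $y$ (so $m=|\gamma_{xy}|$), together with the word in $\{\tau_e:e\in E_G\}$ given by the standard conjugation identity
\begin{equation*}
\tau_{xy}\ =\ \tau_{v_0v_1}\tau_{v_1v_2}\cdots\tau_{v_{m-1}v_m}\,\tau_{v_{m-2}v_{m-1}}\cdots\tau_{v_0v_1},
\end{equation*}
which has length $2m-1\le 2|\gamma_{xy}|$ and in which every $\tau_e$ occurs at most twice. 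Transporting this routing from the identity to all of $\fS(V_G)$ via the Cayley structure, and applying the flow (randomised-path) version of the comparison lemma with uniform stationary law and unit rates, one obtains
\begin{equation*}
\comp^\IP_G\ \lesssim\ \max_{e\in E_G}\ \sum_{x,y\in V_G}\EE\bigl[\,|\gamma_{xy}|\cdot N_e(\gamma_{xy})\,\bigr],
\end{equation*}
where $N_e(\gamma_{xy})\le 2\cdot\1(e\in\gamma_{xy})$ counts the occurrences of $\tau_e$. Everything thus reduces to bounding the \emph{length-weighted congestion} of a well-chosen family of random paths in $G$ by $|V_G|\,\tmix^\RW(G)$.

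The paths $\gamma_{xy}$ I would take to be loop-erasures of trajectories of the lazy random walk (\ref{def:RW}) started at $x$ and stopped upon reaching $y$. Since a single deterministic horizon is wasteful for graphs far from expanders, it is convenient to factor the comparison through the auxiliary graph $G^{(k)}$ with edge weights $p_k(x,y)$ (the $k$-step heat kernel), with $k\asymp\tmix^\RW(G)$: uniform mixing gives $p_k(x,y)\gtrsim 1/|V_G|$, hence the essentially free bound $\cE^\IP_\cK(f)\lesssim |V_G|\,\cE^\IP_{G^{(k)}}(f)$, which supplies the factor $|V_G|$. It then remains to route each long edge $\tau_{xy}$ of $G^{(k)}$ through $G$ and to bound $\max_e\sum_{x,y}p_k(x,y)\,\EE[\,|\gamma_{xy}|\,\1(e\in\gamma_{xy})\,]$ by $\tmix^\RW(G)$. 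Rewriting this via occupation-time identities and using reversibility together with $d$-regularity (so that $p_t(x,y)=p_t(y,x)$), the per-edge contribution turns into a Green's function / hitting-time expression; summing over the $|V_G|^2$ pairs absorbs a heat-kernel total mass of $1$ and leaves a hitting-time-- or effective-resistance--type quantity of $G$ which, for regular graphs, is $\lesssim\tmix^\RW(G)$.

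The main obstacle is exactly this last step. The danger is to produce $\trel^\RW(G)$, the diameter, or a \emph{square} of one of these in place of a single factor $\tmix^\RW(G)$: routing far pairs along length-$k$ walks already loses an extra factor of order $k$, so one must route at the correct scale --- short paths for nearby pairs, long ones only when unavoidable --- and then match the resulting length-weighted congestion against the mixing time itself, which is where the definition of $\tmix^\RW$ and the regularity hypothesis are genuinely used. Establishing this clean congestion estimate is the crux; the comparison bookkeeping and the conjugation identity are routine once it is available.
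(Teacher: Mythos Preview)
Theorem~\ref{th:AK} is quoted from \cite{AK} and is not proved in the present paper. Nonetheless, the paper makes the underlying mechanism explicit: see Section~\ref{sec:octopus} and especially Lemma~\ref{lm:convo}, which the authors explicitly ``borrow from Alon and Kozma''. The decisive tool is the \emph{octopus inequality} of Caputo--Liggett--Richthammer, which yields directly (with no routing whatsoever) the operator comparison $\cE^\IP_{P^2}\le 2\,\cE^\IP_{P}$ for the transition kernel $P$ of the walk, and hence $\cE^\IP_{P^{k}}\le k\,\cE^\IP_{P}$ by iteration. Choosing $k$ so that $P^k(x,y)\gtrsim 1/|V_G|$ pointwise then gives $\cE^\IP_\cK\lesssim|V_G|\,\cE^\IP_{P^{k}}\lesssim|V_G|\,k\,\cE^\IP_{P}$, and regularity converts $k\,\cE^\IP_P$ into $\tmix^\RW(G)\,\cE^\IP_G$ up to constants.

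Your proposal coincides with this at the first stage ($\cE^\IP_\cK\lesssim|V_G|\,\cE^\IP_{G^{(k)}}$), but at the second you attempt to replace the octopus step by canonical paths built from random-walk trajectories, and you rightly flag the resulting congestion bound as ``the crux''. That gap is real and, as stated, not closable by your route. Two concrete obstructions. First, the conjugation word for $\tau_{xy}$ along a length-$m$ simple path has length $2m-1$, and Cauchy--Schwarz in the Diaconis--Saloff-Coste bound pays this factor in full; since $m\ge d_G(x,y)$, you are forced to pay at least the diameter, which on the hypercube is $n$ while the target $\tmix^\RW$ is only of order $\log n$. Second, the unconditioning identity $\sum_y p_k(x,y)\,\EE_{\mathrm{bridge}}[\,\cdot\,]=\EE_x[\,\cdot\,]$ that would let you invoke clean occupation-time or Green's-function bounds is incompatible with loop-erasure, yet without loop-erasure the product of edge-transpositions along a non-simple walk no longer evaluates to $\tau_{xy}$. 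The octopus inequality bypasses both issues: it is an algebraic inequality specific to transposition Dirichlet forms, not a path-counting estimate, and it produces exactly one factor of $k$ rather than the two that a length-weighted congestion bound incurs. This is the missing idea in your proposal.
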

In particular, they deduced the following bound on the mixing times.
\begin{corollary}[Alon and Kozma \cite{AK}]For any regular connected graph $G$,
\begin{eqnarray}
\tmix^{\IP}(G) & \lesssim & \tmix^{\RW}(G)\log |V_G|. 
\end{eqnarray}
\end{corollary}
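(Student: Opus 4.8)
The plan is to deduce the bound directly from Theorem~\ref{th:AK}, using the comparison constant $\comp^{\IP}_{G}$ to transport into $G$ the \emph{sharp $L^{2}$ mixing estimate} available for the mean-field model. Write $N:=|V_{G}|$ and let $\pi$ be the uniform law on $\fS(V_{G})$. List, with multiplicity, the eigenvalues of $-\cL^{\IP}_{G}$ and of $-\cL^{\IP}_{\cK}$ on $L^{2}(\pi)$ as $0=\gamma_{0}<\gamma_{1}\le\gamma_{2}\le\cdots$ and $0=\beta_{0}<\beta_{1}\le\beta_{2}\le\cdots$. Since $\cE^{\IP}_{G}(f)=\langle -\cL^{\IP}_{G}f,f\rangle_{\pi}$ and $\cE^{\IP}_{\cK}(f)=\langle -\cL^{\IP}_{\cK}f,f\rangle_{\pi}$, the defining inequality~\eqref{def:comp} of $\comp^{\IP}_{G}$ says exactly that $-\cL^{\IP}_{\cK}\preceq \comp^{\IP}_{G}\,(-\cL^{\IP}_{G})$ as quadratic forms on $L^{2}(\pi)$, and hence, by the Courant--Fischer min-max principle, $\beta_{k}\le \comp^{\IP}_{G}\,\gamma_{k}$ for every $k$.

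The crucial observation is that this eigenvalue comparison already suffices to compare the $\chi^{2}$-distances to equilibrium, because both $\cL^{\IP}_{G}$ and $\cL^{\IP}_{\cK}$ generate random walks on the group $\fS(V_{G})$: the map $\sigma\mapsto\sigma\tau_{e}$ is a right translation, hence commutes with every left translation, hence so do both generators; consequently the heat-kernel densities are constant on the diagonal, $p^{\IP}_{G,2t}(\sigma,\sigma)=p^{\IP}_{G,2t}(\mathrm{id},\mathrm{id})$ for all $\sigma$. Therefore the $\chi^{2}$-distance from stationarity does not depend on the starting permutation, and averaging $p^{\IP}_{G,2t}(\sigma,\sigma)-1$ over $\sigma\sim\pi$ and using orthonormality of the eigenfunctions collapses it to a pure function of the spectrum:
\[
\chi^{2}_{G}(t)\ :=\ \Big\|\tfrac{P^{\IP}_{G,t}(\mathrm{id},\cdot)}{\pi}-1\Big\|^{2}_{L^{2}(\pi)}\ =\ \sum_{k\ge1}e^{-2\gamma_{k}t}\,,
\]
and likewise $\chi^{2}_{\cK}(t)=\sum_{k\ge1}e^{-2\beta_{k}t}$ for the random transposition walk on $N$ points. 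Combined with $\gamma_{k}\ge\beta_{k}/\comp^{\IP}_{G}$, this gives the one-sided comparison
\[
\chi^{2}_{G}(t)\ \le\ \chi^{2}_{\cK}\!\big(t/\comp^{\IP}_{G}\big)\qquad\text{for all }t\ge0.
\]

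Next I would feed in the classical $L^{2}$ analysis of Diaconis--Shahshahani~\cite{DS}: in the present normalization (total jump rate $\binom{N}{2}$) it yields a universal constant $C$ with $\chi^{2}_{\cK}(C\log N/N)\le e^{-2}$ for all $N\ge2$. Taking $t=\comp^{\IP}_{G}\cdot C\log N/N$ then gives $\chi^{2}_{G}(t)\le e^{-2}$, hence $\|P^{\IP}_{G,t}(\mathrm{id},\cdot)-\pi\|_{\mathrm{TV}}\le\tfrac12\,\chi_{G}(t)\le e^{-1}$, so that $\tmix^{\IP}(G)\le \comp^{\IP}_{G}\cdot C\log N/N$. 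Plugging in Theorem~\ref{th:AK}, which for regular $G$ gives $\comp^{\IP}_{G}\lesssim N\,\tmix^{\RW}(G)$, we obtain $\tmix^{\IP}(G)\lesssim\tmix^{\RW}(G)\log N$, as claimed.

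The one genuinely delicate step is the reduction of $\chi^{2}_{G}$ to a spectral quantity: a \emph{one-sided} Dirichlet-form comparison controls eigenvalues but not eigenfunctions, so in general it does \emph{not} transfer $L^{2}$-distances between two chains on the same state space; what saves the argument is precisely the group invariance of the interchange process, which makes the $\chi^{2}$-distance depend on the spectrum alone. (This is also exactly why one cannot instead run the much cruder argument through the spectral gap: from $\beta_{1}=\gap^{\IP}(\cK)=N$ one only gets $\gap^{\IP}(G)\gtrsim 1/\tmix^{\RW}(G)$ and hence, via $\tmix\le\trel\log|\fS(V_{G})|$, the lossy bound $\tmix^{\IP}(G)\lesssim N\,\tmix^{\RW}(G)\log N$.) Everything else is routine bookkeeping, given the explicit $L^{2}$ estimate for random transpositions.
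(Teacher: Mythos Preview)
Your argument is correct and is precisely the route the paper has in mind: deduce the corollary from Theorem~\ref{th:AK} by transporting the Diaconis--Shahshahani $L^{2}$ estimate for random transpositions through the Dirichlet-form comparison, using the group (Cayley-graph) structure of \IP\ to reduce the $\chi^{2}$-distance to the spectral trace $\sum_{k\ge1}e^{-2\gamma_{k}t}$; this is exactly the content of \cite[Lemma~6]{AK}, to which the paper defers when invoking the same mechanism for Corollary~\ref{co:mix}.
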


Note that this proves Conjecture \ref{conj:main} along sequences $(G_n)_{n\ge 1}$ satisfying $\tmix^{\RW}(G_n)\asymp \trel^{\RW}(G_n)$. Examples include $\cK_n$, $\cP_n$, or the discrete tori $\Z_n,\Z_n^2,\Z_n^3$, etc. On the other hand, for various other graphs such as the hypercube $\Z_2^n$ or bounded-degree expanders, one has
\begin{eqnarray}
\label{highdim}
\frac{\trel^{\RW}(G_n)}{\tmix^{\RW}(G_n)} & \xrightarrow[n\to\infty]{} & 0,
\end{eqnarray}
and Theorem \ref{th:AK} fails at capturing the conjectured asymptotics. In light of this, the next step towards Conjecture \ref{conj:main} should naturally consist in understanding the mixing properties of the \IP\ on graphs satisfying (\ref{highdim}). This is precisely the program to which the present paper is intended to contribute.

\begin{table}
\begin{center}
\begin{tabular}{ |c|c|c|c|c| }
 \hline
 $G$ & $|V_G|$ & $\trel^\RW(G)$ & $\mix^\RW(G)$ & $\tmix^\IP(G)$ \\  
 \hline
 $\cK_n$ & $n$ & $1/n$ & $1/n$ & $(\log n)/n$ \\ 
 \hline
 $\cP_n$ & $n$ &   $n^2$ & $n^2$ & $n^2\log n$ \\ 
 \hline
 $\Z_2^n$  & $2^n$ & $1$ & $\log n$ & $n$\\ 
 \hline
\end{tabular}
\caption{\label{table}Some classical orders of magnitude.}
\end{center}
\end{table}

\section{Results}
\subsection{Comparison constant and mixing time}
A natural and important class of graphs satisfying (\ref{highdim}) are the ``high-dimensional'' graphs obtained by taking cartesian products of a large number of small graphs. Recall that the \emph{cartesian product} $G=G_1\times\cdots \times G_n$ of $n$ graphs $G_1,\ldots,G_n$ is the graph with vertex set
$
V_{G_1}\times\cdots\times V_{G_n},
$ 
and where the neighbors of a vertex $x=(x_1,\ldots,x_n)$ are obtained by replacing an arbitrary coordinate $x_i$ ($1\le i\le n$) with an arbitrary neighbor of $x_i$ in the graph $G_i$. Note that $G$ is connected as soon as $G_1,\ldots,G_n$ are. We will allow the \emph{dimension} $n$ to grow arbitrarily but will keep the \emph{side-length} fixed, meaning that
\begin{eqnarray}
V_{G_1} \ = \ \ldots \ = \ V_{G_n} \ = \ \{1,\ldots,\ell\}
\end{eqnarray}
for some fixed integer $\ell\ge 2$. 
Two simple examples are the $n-$dimensional torus  
$
\Z_\ell^n  =  \Z_\ell\times \cdots\times \Z_\ell,
$
and the $n-$dimensional \emph{Hamming graph} $\cK_\ell^n=\cK_\ell\times \cdots\times \cK_\ell$. In particular, when $\ell=2$, we recover the hypercube. Our main result is the  determination of the exact order of magnitude of $\comp^{\IP}_G$ on all product graphs of fixed side-length. 
\begin{theorem}[Comparison]\label{th:main}All connected product graphs of side-length $\ell\ge 2$ satisfy
\begin{eqnarray}
\comp^{\IP}_G & \asymp_\ell &  |V_G|,
\end{eqnarray}
where $\asymp_\ell$ means equality up to multiplicative constants that depend only on $\ell$.
\end{theorem}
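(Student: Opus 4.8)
The lower bound $\comp^{\IP}_G \gtrsim_\ell |V_G|$ is the easy direction: one simply tests the defining inequality (\ref{def:comp}) on a well-chosen function. The natural candidate is a function depending only on the position of a single tagged particle, say $f(\sigma) = g(\sigma^{-1}(v_0))$ for a fixed vertex $v_0$ and a well-chosen $g\colon V_G\to\R$; for such $f$, both Dirichlet forms reduce (up to the factor $|V_G|!/|V_G|$ coming from summing over the positions of the other particles) to the single-particle random-walk Dirichlet forms $\cE^{\RW}_\cK(g)$ and $\cE^{\RW}_G(g)$. Thus $\comp^{\IP}_G$ is at least the comparison constant between $\cE^{\RW}_\cK$ and $\cE^{\RW}_G$, which equals $|V_G|\,\trel^{\RW}(G)$ up to constants (the mean-field random walk has spectral gap $|V_G|$, i.e.\ relaxation time $1/|V_G|$); since product graphs of fixed side-length $\ell$ have $\trel^{\RW}(G)\asymp_\ell 1$ (the spectral gap of a cartesian product is the minimum of the factor gaps, each of which is bounded below by a constant depending on $\ell$), this gives $\comp^{\IP}_G \gtrsim_\ell |V_G|$.

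The real content is the upper bound $\comp^{\IP}_G \lesssim_\ell |V_G|$. Theorem~\ref{th:AK} of Alon and Kozma already gives $\comp^{\IP}_G \lesssim |V_G|\,\tmix^{\RW}(G)$, but for the hypercube $\tmix^{\RW}(\Z_2^n)\asymp \log n \to \infty$, so this loses a logarithmic factor exactly along the sequences we care about. The plan is therefore to prove the sharp inequality
\begin{eqnarray}
\cE^{\IP}_\cK(f) & \lesssim_\ell & |V_G|\,\cE^{\IP}_G(f)
\end{eqnarray}
directly, exploiting the product structure of $G$ rather than treating it as a black box through $\tmix^{\RW}$. The strategy I would pursue is an \emph{induction on the dimension $n$}, comparing the $n$-fold product $G = H \times G_n$ (with $H = G_1\times\cdots\times G_{n-1}$) against a product with one more ``mean-field'' factor, and ultimately against the complete graph on $V_G$. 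Concretely, one wants a one-step comparison of the form $\comp^{\IP}_{\cK_{|V_H|}\times G_n} \lesssim_\ell 1$ combined with a tensorization statement that lets the dimension-$(n-1)$ bound for $H$ be lifted to $H\times G_n$ at the cost of a bounded factor; iterating $n$ times then produces the claimed $|V_G|\asymp \ell^n$ (the total number of edges being compared grows geometrically, which is where the factor $|V_G|$ enters).

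The technical heart — and the step I expect to be the main obstacle — is the \emph{tensorization / decomposition of the Dirichlet form} for the interchange process across a cartesian product. Unlike a product of independent chains, the interchange process on $G_1\times\cdots\times G_n$ does not factorize: a transposition along an edge of $G_i$ swaps two particles that may have arrived from anywhere, so the permutation dynamics couples all coordinates. The right tool is likely a martingale/spectral decomposition of $L^2(\fS(V_G))$ analogous to the octopus-type inequalities used in the Caputo--Liggett--Richthammer proof of Aldous's conjecture (\ref{Caputo}), or a recursive ``canonical paths'' construction: to route a swap of two far-apart particles in $\cK$ through edges of $G$, one first moves them to a common slice (using the product structure coordinate by coordinate) and then swaps locally, carefully accounting for congestion. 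Controlling this congestion uniformly in $n$ — so that the only growth is the harmless geometric factor $|V_G|$ — while keeping the dependence on $\ell$ explicit but unrestricted, is the crux. A secondary difficulty is that the induction must be set up so that the auxiliary ``partially mean-field'' graphs $\cK_{\ell^{n-1}}\times G_n$ themselves have comparison constant $O_\ell(1)$ relative to the fully mean-field $\cK_{\ell^n}$; this should follow from the Alon--Kozma bound applied to these graphs (whose random walk mixing time is $O_\ell(1)$, since a single mean-field factor dominates), but fitting it cleanly into the recursion requires care.
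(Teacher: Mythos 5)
Your lower bound argument is essentially right and even slightly more elementary than the paper's: testing (\ref{def:comp}) on single-particle observables $f(\sigma)=g(\sigma^{-1}(v_0))$ reduces both sides to the random-walk Dirichlet forms (with a factor $2$, not $|V_G|!/|V_G|$, but that is immaterial), so $\comp^{\IP}_G \ge \trel^\RW(G)/\trel^{\RW}(\cK) = |V_G|\,\trel^\RW(G) \gtrsim_\ell |V_G|$. The paper instead goes directly through the Caputo--Liggett--Richthammer identity $\trel^\IP(G)=\trel^\RW(G)$, which is cleaner but not materially different.

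The upper bound, however, is where the theorem actually lives, and here your proposal is a strategy sketch with the decisive step missing. You propose an induction on the dimension that replaces factors by cliques one at a time, with the step $H\times G_n \rightsquigarrow \cK_{|V_H|}\times G_n$ handled by some ``tensorization'' of the \IP\ Dirichlet form. There is no such tensorization, and you correctly flag this as the main obstacle --- but a proof cannot consist of naming the obstacle. Worse, the accounting does not close: any RW-level canonical-paths comparison from $\cK_{|V_H|}$ to $H$ already costs $\Theta(|V_H|)=\Theta(\ell^{n-1})$ in congestion (that is the entire budget for the $H$ factor), while the remaining comparison $\comp^\IP_{\cK_{|V_H|}\times G_n}$, if estimated via Alon--Kozma, costs another $\Theta(|V_G|)$; the product overshoots the target $\lesssim_\ell |V_G|$ by a factor of $|V_H|$. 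You would need the inductive hypothesis $\comp^\IP_H\lesssim_\ell |V_H|$ to propagate to $\comp^\IP_{H\times G_n}\lesssim_\ell \ell\,\comp^\IP_H$, but there is no reason to believe the comparison constant behaves multiplicatively across cartesian products of the base graph: the state space $\fS(V_H\times V_{G_n})$ is not a product of $\fS(V_H)$ and $\fS(V_{G_n})$, and the \IP\ generator has no factorized structure to recurse on.

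The paper's actual route is entirely different, and the contrast is instructive. It first uses canonical paths on the RW level, lifted generically to \IP\ at a cost of $4$, to reduce from an arbitrary product $G$ of side-length $\ell$ to the Hamming graph $\cK_\ell^n$ at a total cost of $\ell^3$ (this is the only place tensorization of paths is used, and it is used for RW, where it is trivial --- Remark~\ref{rk:prod}). It then reinterprets both $\cK_\ell^n$ and the complete graph as Cayley graphs of the abelian group $\GG=\Z_\ell^n$, with increment laws $\rho_1$ (uniform on weight-one vectors) and $\pi$ (uniform) respectively. The heart of the argument is: (i) use the octopus inequality iteratively to pass from $\rho_1$ to the $t$-fold convolution $\mu^{\star t}$ at cost $O(n)$, with $t=\Theta(n)$ \emph{deliberately chosen so that only about half of the $n$ coordinates get refreshed}, not all of them --- this is the crucial departure from Alon--Kozma, who take $t=\Theta(n\log n)$ to fully randomize and thereby lose the logarithm; (ii) use the de Moivre--Laplace local limit theorem to replace both $\pi$ and $\mu^{\star t}$ by ``plateau'' mixtures $\rho_\cI,\rho_\cJ$ of uniform measures on fixed-support-size slices, concentrated around support sizes $np$ and $np/2$ respectively; (iii) close the gap by canonical paths of length two inside $\GG$: any element of support $\approx np$ is the sum of two elements of disjoint supports each of size $\approx np/2$, which compares $\cE^\IP_{\rho_\cI}$ to $\cE^\IP_{\rho_\cJ}$ with bounded congestion. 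None of these ideas --- the group-theoretic reformulation, the half-refresh choice of $t$, the LLT plateau estimate, the length-two canonical paths in $\Z_\ell^n$ --- appear in your proposal, and I do not see how your induction could be completed without rediscovering something of comparable depth.
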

Our estimate on $\comp^{\IP}_G$ classically yields an upper bound on the mixing time, even in the strong $L^2$ sense (see \cite[Lemma 6]{AK}). Moreover, a standard application of Wilson's method (see \cite[Proposition 1.2]{MR3069380}) yields a matching lower bound. We thus  obtain the following result, which confirms in particular Wilson's long-standing prediction (\ref{conj:wilson}).    
\begin{corollary}[Mixing time]\label{co:mix}All connected product graphs of side-length $\ell\ge 2$ satisfy
\begin{eqnarray}
\label{eq:mix}
\tmix^{\IP}(G) \ \asymp_\ell \ \log |V_G|.
\end{eqnarray}
\end{corollary}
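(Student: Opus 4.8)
The corollary should follow from Theorem~\ref{th:main} by essentially routine bookkeeping: the upper bound via Dirichlet-form comparison, the lower bound via Wilson's eigenvalue method, exactly as signposted just above the statement. I would proceed as follows.

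\textbf{Upper bound.} Feed $\comp^{\IP}_G\lesssim_\ell|V_G|$ from Theorem~\ref{th:main} into the comparison lemma of Alon and Kozma \cite[Lemma~6]{AK}. The mechanism is that the Dirichlet-form inequality~(\ref{def:comp}) transfers to $\IP$ on $G$, at the single multiplicative cost $\comp^{\IP}_G$, not merely the spectral gap but the whole $\ell^2$-spectral profile of $\IP$ on the complete graph $\cK$ with vertex set $V_G$ (equivalently a Nash-type inequality), hence also the $L^2$-mixing time. Since the random transposition shuffle on $\cK_m$ is mixed in $L^2$ by time $\asymp(\log m)/m$ (Diaconis--Shahshahani \cite{DS}), taking $m=|V_G|$ gives
\[
\tmix^{\IP}(G)\ \le\ \tmix^{\IP,(2)}(G)\ \lesssim\ \comp^{\IP}_G\cdot\frac{\log|V_G|}{|V_G|}\ \lesssim_\ell\ \log|V_G|,
\]
which is the `$\lesssim_\ell$' direction of~(\ref{eq:mix}), even in the stronger $L^2$ sense.

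\textbf{Lower bound.} Apply Wilson's method \cite[Proposition~1.2]{MR3069380} to the standard single-particle lift. Let $\phi\colon V_G\to\R$ be a real eigenfunction of the symmetric matrix $\cL^{\RW}_G$ for its smallest nonzero eigenvalue $-\beta$ (so $\beta=1/\trel^{\RW}(G)$), normalised so that $|V_G|^{-1}\sum_{v\in V_G}\phi(v)^2=1$. Since $\trel^{\RW}(G_1\times\cdots\times G_n)=\max_i\trel^{\RW}(G_i)$, we may take $\phi$ to depend only on the coordinate $j$ realising this maximum, namely $\phi(x)=\phi_j(x_j)$ with $\phi_j$ a bottom eigenfunction on $G_j$; then $\beta\asymp_\ell 1$ (each $G_i$ has exactly $\ell$ vertices) and $\NRM{\phi}_\infty\le\sqrt{\ell}$. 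Set
\[
\Phi(\sigma)\ :=\ \sum_{v\in V_G}\phi(v)\,\phi(\sigma(v)),\qquad\sigma\in\fS(V_G).
\]
A one-line computation using the symmetry of $\cL^{\RW}_G$ shows $\cL^{\IP}_G\Phi=-\beta\Phi$; moreover $\Phi(\mathrm{id})=|V_G|$, a short second-moment computation under the uniform measure gives $\VV_{\xi_\star}(\Phi)\asymp|V_G|$, and the fluctuation term $R:=\sup_{\sigma}\sum_{e\in E_G}\big(\Phi(\sigma\tau_e)-\Phi(\sigma)\big)^2$ obeys $R\le 16\,\NRM{\phi}_\infty^{4}\,|E_G|\lesssim_\ell n|V_G|$. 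Plugging these four inputs into Wilson's inequality yields
\[
\tmix^{\IP}(G)\ \gtrsim\ \frac{1}{\beta}\,\log\frac{\Phi(\mathrm{id})^2}{\VV_{\xi_\star}(\Phi)+R/\beta}\ \asymp_\ell\ \log\frac{|V_G|^2}{n|V_G|}\ =\ \log\frac{|V_G|}{n}\ \asymp_\ell\ \log|V_G|,
\]
the last step using $|V_G|=\ell^{\,n}$. Together with the upper bound this gives~(\ref{eq:mix}).

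\textbf{Where the difficulty lies.} Essentially all of the work is upstream, in Theorem~\ref{th:main}, which is taken for granted here; given it, the corollary is bookkeeping. Within the corollary the two points that need a little care are: (i) that the Dirichlet-form comparison genuinely upgrades to an $L^2$-mixing bound with only the factor $\comp^{\IP}_G$ lost --- this rests on the spectral-profile/Nash formalism plus the fine $L^2$ behaviour of random transpositions, not just on Aldous's spectral-gap identity; and (ii) that in the lower bound the fluctuation term $R$ is bounded uniformly in $\sigma$, which is precisely what forces one to use an eigenfunction of bounded sup-norm. Both are harmless only because the side-length $\ell$ is fixed, so that $\beta$, $\beta^{-1}$ and $\NRM{\phi}_\infty$ are all $O_\ell(1)$ and these prefactors never degrade the $\log|V_G|$ scaling; I would regard (ii) as the one spot where a careless argument could lose the bound.
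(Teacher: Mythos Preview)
Your proposal is correct and follows exactly the route the paper indicates: the upper bound by feeding Theorem~\ref{th:main} into \cite[Lemma~6]{AK}, and the lower bound by Wilson's method via \cite[Proposition~1.2]{MR3069380}. The paper gives no further details, and your computations (that $\Phi$ is an eigenfunction with eigenvalue $-\beta$, that $\VV_{\xi_\star}(\Phi)\asymp|V_G|$, and the bound on $R$) are all sound; one small remark is that because your chosen $\phi$ depends only on the single coordinate $j$, edges along the other coordinates contribute zero to $R$, giving the sharper $R\lesssim_\ell|V_G|$ rather than $n|V_G|$, though your cruder estimate already suffices.
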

Note that on product graphs, the single-particle dynamics (\ref{def:RW}) updates each coordinate  independently. Consequently, any connected product graph of side-length $\ell\ge 2$ satisfies
\begin{eqnarray}
\label{0regime}
\trel^{\RW}\left(G\right) & \asymp_\ell & 1,\\ 
\label{regime}
\tmix^{\RW}\left(G\right) & \asymp_\ell & \log\log |V_G|.
\end{eqnarray}
(The double logarithm comes from the fact that there are $\log_\ell |V_G|$ coordinates, and that the time it takes to update all of them a constant number of times is logarithmic in the number of coordinates).
Thus, our Corollary \ref{co:mix} resolves Conjecture \ref{conj:main} for all product graphs of fixed side-length, in a regime where Theorem \ref{th:AK} always fails at doing so.  

\begin{remark}[Pre-cutoff] Let us comment on the constants hidden in our results, at least for the Hamming graph $\cK_\ell^n$. Wilson's eigenvector method \cite{Wilson} produces the precise lower bound 
\begin{eqnarray}
\liminf_{n\to\infty}\left\{\frac{\tmix^\IP(\cK_\ell^n)}{\ell^n}\right\} & \ge & c_\ell,
\end{eqnarray}
with the constant $c_\ell>0$ being completely explicit (for example, we have $c_2=\frac{\ln 4}{2}$). On the other hand, our Corollary \ref{co:mix} guarantees that
\begin{eqnarray}
\limsup_{n\to\infty}\left\{\frac{\tmix^\IP(\cK_\ell^n)}{\ell^n}\right\} & \le & C_\ell,
\end{eqnarray}
for some constant $C_\ell<\infty$ that can certainly be made explicit as well, by a careful examination of our proof. However, we did not try to optimize the value of $C_\ell$, nor even to extract its rough dependency in $\ell$, because we believe that our comparison-based approach is inherently too rough to produce sharp constants anyway. Nevertheless, we note that neither Wilson's lower bound $c_\ell$ nor our upper bound $C_\ell$ change if we replace $\frac 1e$ by any other precision $\varepsilon\in(0,1)$ in the definition (\ref{tmix}), thereby establishing what is known as  a  \emph{pre-cutoff}. Improving this to a true \emph{cutoff} (i.e. $C_\ell=c_\ell$) remains a fascinating open problem.
\end{remark}

We would like to close this section with a plausible extension of Theorem \ref{th:main}, inspired by an analogous result that we recently obtained for the \emph{Zero-Range Process} \cite[Corollary 3]{HS}. 
\begin{conjecture}[General comparison]All finite connected graphs satisfy
\begin{eqnarray}
\comp_G^\IP & \lesssim & |V_G|\,\trel^\RW(G).
\end{eqnarray}
\end{conjecture}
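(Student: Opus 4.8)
The plan is to reduce the general case to the product case via a spanning-tree / path-embedding argument, combined with Theorem~\ref{th:main}. First I would recall the two classical ingredients: the comparison-of-Dirichlet-forms machinery of Diaconis--Saloff-Coste, which lets one bound $\cE^{\IP}_{\cK}$ in terms of $\cE^{\IP}_{G}$ by routing each transposition $\tau_{\{x,y\}}$ for a non-edge $\{x,y\}$ along a path in $G$ and paying a congestion cost; and the fact (from \cite{Caputo}, i.e.\ the resolution of Aldous' conjecture, together with standard path arguments) that $\trel^\RW(G)$ already encodes the optimal quadratic-form congestion of the single-particle walk. The heart of the matter is that for the \emph{interchange} process one cannot simply route transpositions independently: moving a particle from $x$ to $y$ along a path displaces every particle on that path, so a naive path-comparison loses a factor of the path length. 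Alon--Kozma's Theorem~\ref{th:AK} handles this by a clever ``sorting network'' argument that costs an extra $\tmix^\RW(G)$ rather than the diameter; the conjecture asserts that this can be improved to $\trel^\RW(G)$.

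The key steps, in order, would be: (1) Show it suffices to prove the bound when $G$ is vertex-transitive, or at least when $G$ has a transitive group of automorphisms acting on a spanning subgraph — reductions of this kind are standard and let one average the congestion uniformly. (2) Given $G$, find an explicit comparison against a product graph: embed $G$ into a Hamming graph $\cK_\ell^n$ (every graph on $m$ vertices embeds combinatorially into $\cK_m^{?}$, or more efficiently one can use the eigenbasis) in a way that is compatible with the Dirichlet forms, so that $\cE^{\IP}_{\cK_\ell^n} \lesssim \cE^{\IP}_G$ up to a factor controlled by $\trel^\RW(G)$, and then invoke Theorem~\ref{th:main} to pass from $\cE^{\IP}_{\cK_\ell^n}$ to $\cE^{\IP}_\cK$. (3) Alternatively, and more promisingly, adapt the \emph{intermediate-chain} or \emph{octopus-inequality} approach of \cite{Caputo,HS}: introduce a family of auxiliary interchange processes interpolating between $G$ and $\cK$, prove a one-step comparison where the cost is governed by the spectral gap of the single-particle walk rather than its full mixing time, and telescope. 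The analogy with our Zero-Range result \cite[Corollary~3]{HS} suggests that the Zero-Range comparison can be \emph{transferred} to $\IP$ by a suitable collapsing of particle labels, which is where I would concentrate effort.

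The main obstacle is step (3), specifically replacing $\tmix^\RW$ by $\trel^\RW$ in the congestion bound. The difficulty is genuinely structural: the factor $\tmix^\RW(G)$ in Theorem~\ref{th:AK} arises because a sorting network on $G$ needs $\tmix^\RW(G)$ rounds, and there is no \emph{a priori} reason a single-step Dirichlet-form comparison should see only the gap. Our product-graph proof of Theorem~\ref{th:main} presumably circumvents this by exploiting the tensor structure (each coordinate mixes in constant time, so $\tmix^\RW \asymp \trel^\RW$ per coordinate and the loss is only logarithmic in the number of coordinates, which is exactly $\log|V_G|$). For a general graph one lacks this tensor decomposition, so one would need either a new functional inequality at the level of the interchange generator that localizes the cost to the spectral gap, or a clever choice of test flow. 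I would expect that proving the conjecture in full requires such a new inequality — an ``interchange octopus inequality'' sharpened to see the gap — and that the product-graph case, now settled, is the main piece of evidence that it should hold, but also a warning that the general argument will be substantially harder.
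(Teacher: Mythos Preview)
The statement you are attempting to prove is \emph{not} proved in the paper: it is explicitly labelled as a Conjecture (the ``General comparison'' conjecture), introduced as ``a plausible extension of Theorem~\ref{th:main}'' inspired by the analogous Zero-Range result, and immediately followed by the remark that a proof would imply Oliveira's Conjecture~\ref{conj:main}. There is therefore no ``paper's own proof'' to compare against; the authors leave it open.

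Your proposal is honest about this --- you yourself conclude that ``proving the conjecture in full requires such a new inequality'' --- so what you have written is not a proof but a survey of possible attack lines. That is fine as a research plan, but it should not be presented as a proof proposal. On the substance of the plan, a few concrete issues:

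\begin{itemize}
\item Step~(1), the reduction to vertex-transitive $G$, is not standard and I do not see how to carry it out. Averaging over automorphisms equalises congestion along orbits, but for a graph with trivial automorphism group this buys nothing, and there is no general way to pass to a transitive cover without distorting $\trel^{\RW}$.
\item Step~(2), embedding $G$ into a Hamming graph $\cK_\ell^n$, is circular. The two graphs must share the same vertex set for a Dirichlet-form comparison, so one needs $|V_G|=\ell^n$; and even then the comparison $\cE^{\IP}_{\cK_\ell^n}\lesssim C\,\cE^{\IP}_G$ is a canonical-paths bound whose congestion $C$ is, by all known methods, of order $\tmix^{\RW}(G)$ rather than $\trel^{\RW}(G)$. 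If you could get $C\lesssim \trel^{\RW}(G)$ here you would essentially have proved the conjecture directly, without any need for Theorem~\ref{th:main}.
\item Step~(3) is the honest route, and your diagnosis of the obstacle is correct: Alon--Kozma's sorting-network argument intrinsically uses $\tmix^{\RW}$ many rounds, and replacing this by $\trel^{\RW}$ would require a genuinely new functional inequality at the level of $\cL^{\IP}_G$. The paper's proof of Theorem~\ref{th:main} does \emph{not} provide such an inequality in general; it exploits the group structure of $\Z_\ell^n$ (convolution, the octopus inequality applied to iterated increment laws, and a local limit theorem) in a way that has no obvious analogue on an arbitrary graph.
\end{itemize}

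In short: the paper contains no proof of this statement, your proposal does not supply one either, and the gap you identify in your final paragraph is exactly the open problem.
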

Note that a proof of this would immediately imply Conjecture \ref{conj:main}.  

\subsection{Emergence of macroscopic cycles}

One statistics of particular interest is  the cycle structure of the random permutation $\xi_t$, as a function of the time $t$. On the infinite $d-$dimensional lattice $\Z^d$ with $d \ge 3$, a long-standing conjecture of T\'oth \cite{Toth} predicts a phase transition, indicated by the sudden emergence of infinite cycles at some critical time $t=t_c\in(0,\infty)$. This  is related to a major open problem about the so-called \emph{quantum Heisenberg ferromagnet} in statistical mechanics. To the best of our knowledge, the phase transition has only been proved on infinite regular trees \cite{cycleangel,cycleHammond}.

In the case of a large finite graph $G$, the relative lengths of cycles in  a uniform random permutation  asymptotically follow the Poisson-Dirichlet distribution (see, e.g., \cite{cycleSchramm}). In particular, $\xi_t$ is likely to  contain a macroscopic cycle at time $t\ge\tmix^{\IP}(G)$. By analogy with T\'oth's conjecture, one should however expect macroscopic cycles to emerge much before the mixing time. This was established in a  precise sense by Schramm \cite{cycleSchramm} in the {mean-field} case where $G=\cK_n$, see also \cite{cycleberes,cycleBK}. Alas,  results on other finite graphs are quite limited. In \cite{cycleAK}, Alon and Kozma obtained intriguing  identities -- involving the irreducible representations of the symmetric group -- for the expected number of cycles of a given size in $\xi_t$ on any finite graph.  Using these identities, they obtained a comparison-based estimate on the quantity
\begin{eqnarray}
t_{\textsc{cyc}}^{\IP}(G) & := & \min\left\{t\ge 0\colon \PP\left(\xi_t \textrm{ contains a     cycle of length}\ge \frac{|V_G|}{2}\right)\ge \frac 14\right\}.
\end{eqnarray}
\begin{theorem}[Alon and Kozma \cite{AK}]All finite graphs $G$ satisfy 
\begin{eqnarray}
\label{AK:cyc}
t_{\textsc{cyc}}^{\IP}(G) & \lesssim & \frac{\comp^{\IP}_G}{|V_G|}.
\end{eqnarray}
\end{theorem}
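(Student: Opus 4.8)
The plan is to transfer Schramm's mean-field analysis of the random transposition shuffle \cite{cycleSchramm} to $G$ through the comparison inequality (\ref{def:comp}), using the algebraic identities of \cite{cycleAK} as the bridge between the two. Write $n:=|V_G|$. The starting point is the elementary fact that, since $\xi_0=\mathrm{id}$, each irreducible representation $\lambda$ of $\fS(V_G)$ satisfies $\EE[\mathrm{tr}\,\lambda(\xi_t)]=\mathrm{tr}\,e^{tM^G_\lambda}$, where $M^G_\lambda:=\sum_{e\in E_G}(\lambda(\tau_e)-\mathbf{I})$ is symmetric and negative semidefinite (each $\lambda(\tau_e)$ being an involution). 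Since the cycle counts $N_k$ are class functions on $\fS(V_G)$, the identities of \cite{cycleAK} amount, in essence, to expanding $N_k$ --- and hence any functional of the cycle lengths of $\xi_t$ --- into the irreducible characters $\mathrm{tr}\,\lambda$; this rewrites $\EE[N_k(\xi_t)]$ as a linear combination of the heat traces $\mathrm{tr}\,e^{tM^G_\lambda}$.

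The comparison then enters at the level of each $\lambda$ separately. Restricting (\ref{def:comp}) to functions supported on a single isotypic component gives the operator inequality $M^{\cK}_\lambda\succeq\comp^\IP_G\,M^G_\lambda$ (the normalising constants cancel), so Weyl's monotonicity principle yields $\mu_j(M^{\cK}_\lambda)\ge\comp^\IP_G\,\mu_j(M^G_\lambda)$ for the ordered eigenvalues, whence
\[
\mathrm{tr}\,e^{sM^G_\lambda}\ \le\ \mathrm{tr}\,e^{(s/\comp^\IP_G)M^{\cK}_\lambda}\qquad\text{for all }s\ge0\text{ and all }\lambda.
\]
In words, the $\lambda$-th heat trace of the $\IP$ on $G$ after time $s$ is dominated by that of the $\IP$ on $\cK$ after the slowed time $s/\comp^\IP_G$. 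Fed back into the expansion of \cite{cycleAK}, this should produce a genuine comparison between the cycle statistics of $\xi^G_t$ and those of $\xi^{\cK}_{t/\comp^\IP_G}$.

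I would then invoke the mean-field input. Schramm's theorem \cite{cycleSchramm} (with the refinements of \cite{cycleberes,cycleBK}) provides a universal constant $c_0$ such that, for $t\ge c_0/n$, the random transposition shuffle $\xi^{\cK}_t$ has very few vertices in short cycles, e.g. $\EE[\#\{v:\ |c_{\xi^{\cK}_t}(v)|<n/2\}]\le\frac38 n$. Writing $S_\sigma:=\#\{v:\ |c_\sigma(v)|<n/2\}$, one has $\{S_\sigma<n/2\}\subseteq\{\sigma\ \text{has a cycle of length}\ge n/2\}$, so a one-line Markov bound turns the expectation estimate into $\PP(\xi^{\cK}_t\ \text{has a cycle of length}\ge n/2)\ge\frac14$; transporting this through the comparison at scale $t=c_0\,\comp^\IP_G/n$ yields $t^{\IP}_{\textsc{cyc}}(G)\lesssim\comp^\IP_G/|V_G|$, as required.

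The hard part will be the sign bookkeeping in the middle step: the per-$\lambda$ trace inequality above has a \emph{fixed} direction, whereas the character expansion of a cycle functional such as $\#\{v:\ |c_\sigma(v)|<n/2\}$ carries coefficients of both signs (already at cycles of length $2$ one meets a negative coefficient, on the irrep $(n-2,1,1)$), so it cannot be applied term by term. The remedy should be to isolate the bounded family of near-trivial representations --- those $\lambda$ for which $M^G_\lambda$ has an eigenvalue close to $0$ --- which govern the cycle statistics in the supercritical window and for which the relevant combination of coefficients does have a definite sign, while bounding the aggregate contribution of all remaining $\lambda$ by a crude spectral estimate. Making this truncation quantitative, controlling its error, and checking that the slowed time $t/\comp^\IP_G$ still lands in Schramm's supercritical regime, is where the real work lies; the rest is soft.
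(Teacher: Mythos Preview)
This theorem is not proved in the present paper; it is quoted from \cite{AK}, so there is no in-paper argument to compare against. Your outline does match the broad architecture that Alon and Kozma use: express a cycle statistic via the representation-theoretic identities of \cite{cycleAK}, transfer it from $G$ to $\cK$ through the per-$\lambda$ trace inequality $\mathrm{tr}\,e^{sM^G_\lambda}\le\mathrm{tr}\,e^{(s/\comp^\IP_G)M^{\cK}_\lambda}$ (which you derive correctly from (\ref{def:comp}) and Weyl monotonicity), and then feed in the mean-field input from \cite{cycleSchramm}.

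The gap you yourself flag is real, and the patch you propose does not close it. The character expansion of a generic cycle functional such as $S_\sigma=\#\{v:\ |c_\sigma(v)|<n/2\}$ does carry coefficients of both signs, so the one-sided trace bound cannot be summed term by term. Your suggested remedy---isolate a ``bounded family of near-trivial representations'' and control the remainder by a crude spectral estimate---is not workable as stated: there is no finite family dominating the expansion uniformly in $n$, the tail involves $e^{\Theta(\sqrt{n})}$ partitions whose dimensions grow much faster still, and you offer no cancellation mechanism for the aggregate. The way \cite{cycleAK,AK} actually get around this is not by truncation but by choosing the observable so that the relevant expansion is sign-definite from the start; this is exactly the content of the ``intriguing identities'' the paper alludes to just before stating the theorem. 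Once such an observable is in hand, your per-$\lambda$ comparison and your Markov-inequality endgame go through essentially as written. In short: the architecture is right, but the load-bearing step is the choice of observable, not a post-hoc truncation of the one you chose.
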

Thus, our main result implies that on high-dimensional graphs, macroscopic cycles do indeed emerge much before a single particle even mixes (recall (\ref{regime})). 
\begin{corollary}[Giant cycles]\label{co:cycles}All connected product graphs $G$ of fixed side-length $\ell$ satisfy
\begin{eqnarray}
t_{\textsc{cyc}}^{\IP}(G) & \lesssim_\ell &  1.
 \end{eqnarray} 
\end{corollary}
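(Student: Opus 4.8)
The plan is to derive Corollary \ref{co:cycles} directly from Theorem \ref{th:main} together with the Alon--Kozma estimate (\ref{AK:cyc}); no new probabilistic input is required. First I would recall that (\ref{AK:cyc}) holds for \emph{every} finite graph, with no product structure or regularity hypothesis, so it applies verbatim to any connected product graph $G$ of side-length $\ell$:
\begin{eqnarray}
t_{\textsc{cyc}}^{\IP}(G) & \lesssim & \frac{\comp^{\IP}_G}{|V_G|}.
\end{eqnarray}
Next I would invoke Theorem \ref{th:main}, which supplies $\comp^{\IP}_G \le C_\ell\,|V_G|$ for a finite constant $C_\ell$ depending only on the side-length $\ell$ (and neither on the dimension $n$ nor on the individual factors $G_1,\dots,G_n$). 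Substituting this bound into the display above and absorbing the universal constant of (\ref{AK:cyc}) into $C_\ell$ yields $t_{\textsc{cyc}}^{\IP}(G) \le C'_\ell$, which is precisely the assertion $t_{\textsc{cyc}}^{\IP}(G) \lesssim_\ell 1$.

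The main --- indeed essentially the only --- difficulty lies entirely upstream, in the upper-bound half of Theorem \ref{th:main}; the corollary itself is a one-line deduction. Two compatibility points deserve a quick check. First, the cycle estimate (\ref{AK:cyc}) is stated by Alon and Kozma for arbitrary finite graphs, in contrast to their comparison theorem (Theorem \ref{th:AK}), which carries a regularity assumption; since we never invoke Theorem \ref{th:AK} here, no regularity of $G$ (nor of the factors $G_1,\dots,G_n$) is needed. Second, Theorem \ref{th:main} is likewise established for all connected product graphs of side-length $\ell$ without a regularity hypothesis, so both inputs live on exactly the same class of graphs and compose cleanly. Granting these, the chain of inequalities above is complete; it even shows that the constant hidden in Corollary \ref{co:cycles} is, up to the universal factor from (\ref{AK:cyc}), the same $C_\ell$ furnished by Theorem \ref{th:main}.
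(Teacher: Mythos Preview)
Your proposal is correct and matches the paper's own reasoning exactly: the paper also derives Corollary~\ref{co:cycles} as an immediate consequence of Theorem~\ref{th:main} combined with the Alon--Kozma bound~(\ref{AK:cyc}), with no additional argument.
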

We note that Corollary \ref{co:cycles} may not be sharp: it is actually quite possible that the macroscopic cycles already emerge at time $\Theta(1/n)$ (where $n$ is the number of terms in the product), although proving this would require new ideas beyond the Alon-Kozma estimate (\ref{AK:cyc}). When specialized to the  hypercube $\Z_2^n$, Corollary \ref{co:cycles} complements a result of Koteck{\`y},  Mi{\l}o{\'s} and Ueltschi \cite{kotecky2016} regarding the appearance of mesoscopic cycles. It also complements a recent result by Adamczak, Kotowski and Mi{\l}o{\'s} \cite{adamczak2018phase}, who established a phase transition for the emergence of macroscopic cycles on the $2-$dimensional Hamming graph $\cK_n^2$. Finally, we note that, by virtue of \cite[Theorem 13]{AK}, our main result also has direct implications on the magnetisation of the quantum Heisenberg ferromagnet. 

\subsection{Exclusion process}
Another widely-studied interacting particle system is the {exclusion process}   \cite{MR3075635,Liggettbook1,Liggettbook2,MR1707314}. For a finite graph $G$ and an integer $0<k<|V_G|$, the \emph{$k-$particle exclusion process} (\EX)  on $G$ is a  Markov chain on the set ${V_G\choose k}$ of $k-$element subsets of $V_G$, with generator given by
\begin{eqnarray}
\cL_G^{\EX}f(S) & := & \sum_{e\in \partial S}\left(f(S\oplus e)-f(S)\right),
\end{eqnarray}
where $\oplus$ denotes the symmetric difference and $\partial S$ the   edge-boundary of $S$ in $G$. This process describes the set occupied by $k$ fixed particles under the \IP. More precisely, the \EX\ $(\zeta_t)_{t\ge 0}$ with initial condition $S\in {V_G\choose k}$ can be constructed from the \IP\  $(\xi_t)_{t\ge 0}$   by setting $\zeta_t:=\xi_t^{-1}(S)$. This observation, together with (\ref{Caputo}), easily implies that 
\begin{eqnarray}
\frac{\trel^{\RW}(G)}{\trel^{\RW}(\cK)}\ \le & \comp^{\EX}_G & \le \ \comp^{\IP}_G,
\end{eqnarray}
where $\cK$ denotes the complete graph on $V_G$, and $\comp^{\EX}_G$ the optimal constant in the functional inequality $\cE^{\EX}_{\cK}\left(\cdot\right)  \le  \comp_G^{\EX}\,\cE^{\EX}_G\left(\cdot\right)$. Recalling (\ref{0regime}) and the fact that $\trel^\RW(\cK)=1/|V_G|$, we obtain the following corollary.
\begin{corollary}[Comparison constant for \EX]\label{co:EX}For all connected product graphs $G$ of side-length $\ell\ge 2$, and all $0<k<|V_G|$, we have $\comp^{\EX}_G  \asymp_\ell   |V_G|.$
\end{corollary}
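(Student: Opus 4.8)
\textbf{Proof plan for Corollary \ref{co:EX}.}

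The plan is to simply combine Theorem \ref{th:main} with the two-sided inequality $\tfrac{\trel^{\RW}(G)}{\trel^{\RW}(\cK)} \le \comp^{\EX}_G \le \comp^{\IP}_G$ recorded just above the statement. The upper bound is immediate: by Theorem \ref{th:main}, $\comp^{\IP}_G \asymp_\ell |V_G|$, so $\comp^{\EX}_G \le \comp^{\IP}_G \lesssim_\ell |V_G|$. For the lower bound, I would use that $G$ is a connected product graph of side-length $\ell$, so by \eqref{0regime} one has $\trel^{\RW}(G) \asymp_\ell 1$, while $\trel^{\RW}(\cK) = 1/|V_G|$ since the complete graph on $m$ vertices has spectral gap $m$. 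Hence $\comp^{\EX}_G \ge \trel^{\RW}(G)/\trel^{\RW}(\cK) = |V_G|\,\trel^{\RW}(G) \gtrsim_\ell |V_G|$. Together these give $\comp^{\EX}_G \asymp_\ell |V_G|$, uniformly in $k$.

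The only point that deserves a word of justification is the chain of inequalities itself, which the excerpt attributes to the representation $\zeta_t = \xi_t^{-1}(S)$ together with Aldous's spectral-gap identity \eqref{Caputo}. The right inequality $\comp^{\EX}_G \le \comp^{\IP}_G$ follows because any function $f\colon \binom{V_G}{k}\to\R$ lifts to the function $\tilde f(\sigma):=f(\sigma^{-1}(S))$ on $\fS(V_G)$ in a way that preserves both Dirichlet forms: $\cE^{\EX}_G(f) = \cE^{\IP}_G(\tilde f)$ and $\cE^{\EX}_{\cK}(f) = \cE^{\IP}_{\cK}(\tilde f)$, since replacing the edge $e$ by the transposition $\tau_e$ acts on $\sigma^{-1}(S)$ exactly as the move $S\oplus e$ when $e\in\partial S$ and trivially otherwise; applying \eqref{def:comp} to $\tilde f$ then yields \eqref{def:comp} for $\EX$ with the same constant. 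The left inequality is a general fact: $\comp^{\EX}_G$ is sandwiched below by the ratio of the two spectral gaps on $\binom{V_G}{k}$, and by Caputo--Liggett--Richthammer \eqref{Caputo} these exclusion gaps equal the corresponding single-particle gaps $\trel^{\RW}(\cdot)^{-1}$, giving $\comp^{\EX}_G \ge \trel^{\RW}(G)/\trel^{\RW}(\cK)$.

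I do not anticipate any genuine obstacle here: the corollary is a direct consequence of the already-established Theorem \ref{th:main} and standard facts about comparison constants and spectral gaps. The one subtlety worth flagging is uniformity in $k$ — but both the upper bound (via $\comp^{\IP}_G$, which does not see $k$) and the lower bound (via the spectral-gap ratio, which by \eqref{Caputo} is the $k$-independent quantity $|V_G|\,\trel^{\RW}(G)$) are manifestly independent of $k$, so the estimate $\comp^{\EX}_G \asymp_\ell |V_G|$ holds simultaneously for all $0<k<|V_G|$.
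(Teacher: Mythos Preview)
Your proposal is correct and follows exactly the paper's own argument: the paper derives the corollary directly from the two-sided inequality $\trel^{\RW}(G)/\trel^{\RW}(\cK)\le \comp^{\EX}_G\le \comp^{\IP}_G$, combined with Theorem~\ref{th:main}, the estimate \eqref{0regime}, and $\trel^{\RW}(\cK)=1/|V_G|$. Your additional paragraph justifying the inequality chain (lifting $f$ to $\tilde f(\sigma)=f(\sigma^{-1}(S))$ for the upper bound, and invoking \eqref{Caputo} for the lower bound) and your remark on uniformity in $k$ are correct elaborations of what the paper leaves implicit.
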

As a consequence, one can transfer many quantitative estimates from $\cK$ to $G$. This includes  the \emph{inverse log-Sobolev constant}  $\rho^{\EX}_{G}$, defined as the smallest number such that 
\begin{eqnarray}
\label{LSI}
\EE[f\log f]-\EE[f]\log\EE[f] & \le &  \rho^{\EX}_{G}\,\cE^{\EX}_G \left(\sqrt{f}\right)
\end{eqnarray}
 for all $f\colon  {V_G\choose k} \to (0,\infty)$, where $\EE[\cdot]$ is expectation under the uniform law. This constant provides powerful controls on the underlying Markov semi-group  \cite{MR1410112}.  It is easy to see that
\begin{eqnarray}
\frac 12\,\trel^{\RW}(G) \ \le & \rho^{\EX}_{G} & \le \ \comp_G^{\EX} \rho^{\EX}_{\cK}.
\end{eqnarray}
On the other hand, the log-Sobolev constant of the exclusion process on the complete graph (\emph{Bernoulli-Laplace model})  was determined by Lee and Yau \cite[Theorem 5]{LY}:
\begin{eqnarray}
\rho^{\EX}_{\cK_n} & \asymp & \frac{1}{n}\log \frac{n^2}{k(n-k)}.
\end{eqnarray} 
In particular, this allows us to pinpoint the exact order of  $\rho^{\EX}_G$ in the dense-particle regime.
\begin{corollary}[Log-Sobolev constant of \EX]Fix $\varepsilon\in(0,1)$, $\ell\ge 2$. Then, for all connected product graphs $G$ of side-length $\ell$ and all $k\in\left[\varepsilon|V_G|,(1-\varepsilon){|V_G|}\right]$, we have
$
\rho^{\EX}_G  \asymp_{\ell,\varepsilon}   1.
$
\end{corollary}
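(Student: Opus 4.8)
The plan is to combine the two-sided bound $\frac12\trel^{\RW}(G)\le\rho_G^{\EX}\le\comp_G^{\EX}\,\rho_{\cK}^{\EX}$ recorded above (with $\cK=\cK_n$ the complete graph on $V_G$ and $n:=|V_G|$) with the ingredients already in hand: $\trel^{\RW}(G)\asymp_\ell 1$ from (\ref{0regime}), $\comp_G^{\EX}\asymp_\ell n$ from Corollary \ref{co:EX}, and the Lee--Yau formula $\rho_{\cK_n}^{\EX}\asymp\frac1n\log\frac{n^2}{k(n-k)}$. The only genuinely needed observation is that the dense-particle hypothesis forces the Lee--Yau logarithm to be $O_\varepsilon(1)$, at which point everything multiplies out.

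\emph{Lower bound.} Immediately $\rho_G^{\EX}\ge\frac12\trel^{\RW}(G)\gtrsim_\ell 1$; note that this half requires no restriction on $k$ at all.

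\emph{Upper bound.} For $k\in[\varepsilon n,(1-\varepsilon)n]$ one has $k(n-k)\ge\varepsilon(1-\varepsilon)n^2$, hence $\log\frac{n^2}{k(n-k)}\le\log\frac1{\varepsilon(1-\varepsilon)}=:c_\varepsilon<\infty$, so the Lee--Yau formula gives $\rho_{\cK_n}^{\EX}\lesssim_\varepsilon 1/n$. Substituting this together with $\comp_G^{\EX}\lesssim_\ell n$ (Corollary \ref{co:EX}) into $\rho_G^{\EX}\le\comp_G^{\EX}\,\rho_{\cK_n}^{\EX}$ yields $\rho_G^{\EX}\lesssim_\ell n\cdot\frac{c_\varepsilon}{n}\lesssim_{\ell,\varepsilon}1$. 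Combined with the lower bound, $\rho_G^{\EX}\asymp_{\ell,\varepsilon}1$.

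\emph{Main obstacle.} There is essentially none left once Theorem \ref{th:main} (equivalently Corollary \ref{co:EX}) is established; this corollary is a pure bookkeeping consequence of it and of the classical inequalities quoted above. The one point worth flagging is that the dense-particle restriction is essential to this particular argument: for $k$ of constant order the Lee--Yau logarithm is of order $\log n$, and the same computation would then deliver only $\rho_G^{\EX}\lesssim_\ell\log n$ — an estimate that is presumably far from optimal but whose improvement would require an input finer than the comparison inequality (\ref{def:comp}).
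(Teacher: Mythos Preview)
Your proof is correct and is precisely the argument the paper intends: combine the two-sided bound $\tfrac12\trel^{\RW}(G)\le\rho_G^{\EX}\le\comp_G^{\EX}\rho_{\cK}^{\EX}$ with (\ref{0regime}), Corollary~\ref{co:EX}, and the Lee--Yau estimate, noting that the logarithm is $O_\varepsilon(1)$ in the dense-particle regime. The paper does not spell out a separate proof beyond laying out these ingredients, so there is nothing to compare.
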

Finally, we note that our main result also implies an upper bound of order $n$ (uniformly in $1\le k\le 2^n$) on the $L^2$ mixing time of \EX\ on the hypercube $\Z_2^n$, complementing a total-variation estimate recently obtained by Hermon and  \cite{Pymar} (as part of a much more general result).

\section{Proof of the main result}
\subsection{Proof outline}
The lower bound in Theorem \ref{th:main} is easy. Indeed, if $G$ is any finite graph and if $\cK$ denotes the complete graph on $V_G$, then the very definition of $\chi_G^\IP$ implies
\begin{eqnarray}
\chi_G^\IP & \ge & \frac{\trel^\IP(G)}{\trel^{\IP}(\cK)} \ = \ \frac{\trel^\RW(G)}{\trel^{\RW}(\cK)} \ = \ |V_G|\,\trel^\RW(G),
\end{eqnarray}
where the first equality uses (\ref{Caputo}). For a graph product of side-length $\ell$, we deduce
\begin{eqnarray}
\chi_G^\IP & \gtrsim_\ell & |V_G|.
\end{eqnarray}
The remainder of the paper is devoted to proving a matching upper bound. To do so, we combine four simple ideas, each one corresponding to a step of the proof. 
\begin{enumerate}
\item Our first step consists in reducing the analysis of \IP\ on a   general $n-$dimensional graph-product $G$ of side-length $\ell$ to the special case of the Hamming graph $\cK_\ell^n$. This reduction relies on the classical method of canonical paths. An important simplification is that, by a standard path-lifting procedure, it is actually enough to just compare the single-particle  on $G$ to that on $\cK_\ell^n$. See  Section \ref{sec:cano} for details.
\item Our second step consists in re-interpreting the single-particle dynamics on $\cK_\ell^n$ as a random walk on the additive group $\Z_n^\ell$, with the increment law $\mu$ being uniform over vectors with a single non-zero coordinate. This algebraic reformulation is performed in Section \ref{sec:octopus}. It will allow one to use group-theoretical methods. 
\item The third step consists in exploiting the celebrated \emph{octopus inequality} \cite[Theorem 2.3]{Caputo} to compare the \IP\ with increment law $\mu$ to the \IP\ with increment law $\mu^{\star t}=\mu\star\cdots\star\mu$ ($t-$fold convolution), at a cost of order $t$. This is directly inspired by what Alon and Kozma did in \cite{AK}. However, instead of taking $t=\Theta(n\log n)$ so as to ensure that $\mu^{\star t}$ is close to uniform (all coordinates being refreshed with high probability), we crucially take $t=\Theta(n)$ only, with the prefactor being carefully adjusted so that only roughly \underline{half} of the coordinates get refreshed under $\mu^{\star t}$. This important point is made rigorous by an application of the de Moivre - Laplace Local Limit Theorem, see Section \ref{sec:LLT}.
\item Finally, the last step consists in showing that, although the increment law $\mu^{\star t}$ is still very far from uniform (because of our choice of $t$), the associated Dirichlet form is actually comparable to the one with uniform increments. This is achieved by constructing canonical paths of length $2$, the underlying intuition being that randomizing all coordinates of a vector can be achieved by randomizing half the coordinates in one step, and the other half in a second step. This is described in Section \ref{sec:final}.
\end{enumerate}
\subsection{Canonical paths}
\label{sec:cano}
Our starting point is a powerful  tool for comparing Dirichlet forms known as \emph{canonical paths}, see e.g., \cite{comparison}. As a warm-up, consider the single-particle dynamics (\ref{def:RW}) with Dirichlet form
\begin{eqnarray}
\cE^\RW_G(f) & := & \frac{1}{2|V_G|}\sum_{\{x,y\}\in E_G}\left(f(x)-f(y)\right)^2.
\end{eqnarray}
As usual, a \emph{path}  in $G$ will be a finite sequence of vertices $\gamma=(\gamma_0,\ldots, \gamma_k)$ $(k\ge 0)$ such that $e_i:=\{\gamma_{i-1},\gamma_i\}\in E_G$ for each $1\le i\le k$. We call $k$ the \emph{length} of the path and denote it by $|\gamma|$. Also, we refer to $\gamma_0,\gamma_k$ as the \emph{endpoints} of the path $\gamma$, and to $e_1,\ldots,e_k$ as the \emph{traversed} edges. By a \emph{random path} in $G$, we simply mean a random variable taking value in the (countable) set of all paths in $G$. We write $\EE[\cdot]$ for the corresponding expectation.
\begin{theorem}[Canonical paths, see e.g. \cite{comparison}]\label{th:cano}
Let $G$, $H$ be connected graphs on the same vertex set. For each edge $f\in E_H$, let $\gamma_f$ be any random path in $G$ with the same endpoints as $f$. Then, $\cE_{H}^{\RW}  \le  \kappa\,\cE_{G}^{\RW}$ where  $\kappa$ is the congestion, defined as follows:
\begin{eqnarray}
\label{def:congestion}
\kappa & = & \max_{e\in E_G}\left\{\sum_{f\in E_H}\EE\left[|\gamma_{f}|{\bf 1}_{\left(\gamma_{f}\textrm{ traverses }e\right)}\right]\right\}.
\end{eqnarray}
\end{theorem}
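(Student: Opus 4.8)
The plan is to expand the Dirichlet form $\cE_H^{\RW}(f)$ edge by edge, telescope each term $f(\gamma_0)-f(\gamma_k)$ along the chosen random path $\gamma_f$ in $G$, and then control the resulting sum by Cauchy--Schwarz before reorganizing it as a weighted sum of the local increments $(f(x)-f(y))^2$ over edges of $G$. Concretely, I would first fix an arbitrary $f\colon V\to\R$ (where $V=V_G=V_H$) and write, for each $e'=\{x,y\}\in E_H$ and each realization of the random path $\gamma_{e'}=(\gamma_0,\ldots,\gamma_k)$ joining $x$ to $y$ in $G$,
\begin{eqnarray}
f(x)-f(y) & = & \sum_{i=1}^{k}\bigl(f(\gamma_{i-1})-f(\gamma_i)\bigr) \ = \ \sum_{e\in E_G}\varepsilon_{e'}(e)\,\bigl(f(\gamma_{i(e)-1})-f(\gamma_{i(e)})\bigr),
\end{eqnarray}
where $\varepsilon_{e'}(e)\in\{0,\pm 1\}$ records whether and in which orientation $e$ is traversed by $\gamma_{e'}$ (for a path that traverses an edge several times one sums the corresponding terms; this only helps the bound). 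Squaring and applying Cauchy--Schwarz with respect to the counting measure on the traversed edges gives, for each fixed realization,
\begin{eqnarray}
\bigl(f(x)-f(y)\bigr)^2 & \le & |\gamma_{e'}|\sum_{i=1}^{|\gamma_{e'}|}\bigl(f(\gamma_{i-1})-f(\gamma_i)\bigr)^2,
\end{eqnarray}
and taking expectations over the randomness of $\gamma_{e'}$ preserves this inequality.

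Next I would sum over $e'\in E_H$ and divide by $2|V|$, so that the left-hand side becomes exactly $\cE_H^{\RW}(f)$. On the right-hand side I would swap the order of summation, grouping by the edge $e\in E_G$ being traversed: the coefficient multiplying a given term $(f(u)-f(v))^2$ with $\{u,v\}=e$ is precisely
\begin{eqnarray}
\sum_{e'\in E_H}\EE\Bigl[\,|\gamma_{e'}|\,{\bf 1}_{(\gamma_{e'}\textrm{ traverses }e)}\Bigr],
\end{eqnarray}
which is bounded above by the congestion $\kappa$ defined in \eqref{def:congestion}. (Here one should be slightly careful with multiplicities: the $\ell^1$--$\ell^1$ Cauchy--Schwarz step produces, for a path traversing $e$ with multiplicity $m_e\ge 1$, a contribution $m_e$ rather than $1$; one either absorbs this by defining the indicator-weight as the multiplicity count, or simply notes that optimal canonical paths may be taken without backtracking, so $m_e\le 1$. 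Either convention makes the statement match.) Pulling $\kappa$ out of the sum leaves exactly $\kappa$ times $\frac{1}{2|V|}\sum_{\{u,v\}\in E_G}(f(u)-f(v))^2=\kappa\,\cE_G^{\RW}(f)$, which is the claimed inequality.

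The only genuinely delicate point is bookkeeping rather than mathematics: one must make sure the combinatorial identity "sum over $H$-edges of path-increments $=$ sum over $G$-edges weighted by traversal counts" is set up consistently with the definition of congestion, including the correct treatment of orientation (signs square away harmlessly) and of repeated edges. Everything else—the telescoping, the Cauchy--Schwarz with weight equal to the path length, and the exchange of finite/countable sums of nonnegative terms (justified by Tonelli)—is routine. I do not expect to need connectedness of $G$ or $H$ beyond what guarantees that a path $\gamma_{e'}$ with the prescribed endpoints exists and that $\cE_G^{\RW}$, $\cE_H^{\RW}$ are genuine Dirichlet forms; connectedness of $H$ is in fact not used at all in the inequality itself.
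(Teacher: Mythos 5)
The paper does not actually prove Theorem~\ref{th:cano}; it cites the result to Diaconis and Saloff-Coste, so there is no in-paper argument to compare against. Your reconstruction is the standard one from that literature (telescope along the canonical path, apply Cauchy--Schwarz with weight equal to the path length, reorganize by $G$-edges), correctly extended to random paths by taking expectations before swapping sums, and it is correct.

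One remark on the multiplicity issue you flag: your parenthetical is the right instinct but should be made precise, because the argument as written literally produces the congestion with the \emph{multiplicity} count $m_e(\gamma_{e'})$ in place of the indicator ${\bf 1}_{(\gamma_{e'}\text{ traverses }e)}$, and these differ if a path backtracks. The clean fix is not to ``assume $m_e\le 1$'' by fiat, but to observe that one may replace each realization of $\gamma_{e'}$ by any loop-erased (simple) path $\gamma'_{e'}$ with the same endpoints; pointwise one has $|\gamma'_{e'}|\le|\gamma_{e'}|$ and ${\bf 1}_{(\gamma'_{e'}\text{ traverses }e)}\le {\bf 1}_{(\gamma_{e'}\text{ traverses }e)}$, so the congestion of the simplified random paths is bounded by the $\kappa$ in \eqref{def:congestion}. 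Running your argument on the simplified paths (for which multiplicity and indicator coincide) then yields $\cE_H^{\RW}\le\kappa\,\cE_G^{\RW}$ as stated. With that single clarification the proof is complete and matches the textbook treatment.
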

We now make three elementary but important remarks. 
\begin{remark}[Trivial choice]\label{rk:crude}Even in the worst-case situation where $H$ is the complete graph on $V_G$, we can always achieve the  poor  bound
\begin{eqnarray}
\label{worst}
\kappa & \le & \frac{|V_G|^3}{4},
\end{eqnarray}
by considering a spanning tree $T$ of $G$ and letting $\gamma_f$ be the unique simple path in $T$ connecting the endpoints of $f$. Note that this path is actually non-random. Exploiting randomness and the particular structure of $H$ to design paths with a low congestion is a matter of art. 
\end{remark}
\begin{remark}[Congestion behaves well under products]\label{rk:prod}
If for $1\le i\le n$, we can compare $G_i$ to $H_i$ with congestion $\kappa_i$, then we can compare $G_1\times \cdots\times G_n$ to $H_1\times\cdots\times H_n$ with congestion 
\begin{eqnarray}
\kappa & = & \max\left(\kappa_1,\ldots,\kappa_n\right),
\end{eqnarray}
by considering paths that only evolve along a single coordinate, in the obvious way.
\end{remark}
\begin{remark}[Cayley graphs]\label{rk:caylay}Theorem \ref{th:cano} simplifies when $G=\textrm{Cay}(\GG,A)$ and $H=\textrm{Cay}(\GG,B)$ are Cayley graphs generated by subsets $A,B$ of a finite group $\GG$. Indeed, any word $\omega=(\omega_1,\ldots,\omega_k)\in A^k$  can be used to define a path 
\begin{eqnarray}
\gamma_{x,xb} & := & \left(x,x\omega_1,\ldots,x\omega_1\cdots\omega_k\right)
\end{eqnarray}
in $G$ from $x\in\GG$ to  $xb$, where $b=\omega_1\cdots\omega_k$ is the evaluation of $\omega$ in $\GG$. Consequently, we only have to specify, for each $b\in B$, a random word $\omega_b$ over $A$ whose evaluation is $b$. Moreover, a straightforward computation shows that the resulting congestion is simply
\begin{eqnarray}
\kappa & = & \max_{a\in A}\left\{\sum_{b\in B}\EE\left[|\omega_b|N(a,\omega_b)\right]\right\},
\end{eqnarray}
where  $|\omega|$ denotes the length of a word $\omega$, and $N(a,\omega)$ the number of occurrences of $a$ in it. 
\end{remark}
  
Remark \ref{rk:caylay} applies in particular to the \IP\ on any graph $G$. Indeed, one has
\begin{eqnarray}
\cE_G^{\IP} & = & \cE^{\RW}_{\cay(\GG,A)},
\end{eqnarray}
with $\GG=\fS(V_G)$ and $A=\{\tau_e\colon e\in E_G\}$. Moreover, any path in $G$ with endpoints $f=\{x,y\}$ and traversed edges $e_1,\ldots,e_k$  can be \emph{lifted} to a word over  $A$ that evaluates to $\tau_{f}$, namely:
\begin{eqnarray}
\omega & := &  \left(\tau_{e_1},\cdots,\tau_{e_{k-1}},\tau_{e_k },\tau_{e_{k-1}},\ldots,\tau_{e_1}\right).
\end{eqnarray}
Since the congestion is multiplied by at most $4$ ($2$ for the length of the word, and $2$ for the number of occurrences of a letter in it), we obtain the following classical result.
\begin{corollary}[From canonical paths for $\RW$ to canonical paths for \IP]\label{co:cano}Under the exact same assumptions (and notation)  as in Theorem \ref{th:cano}, we also have
\begin{eqnarray}
\cE_{H}^{\IP} & \le & 4\kappa\,\cE_{G}^{\IP}.
\end{eqnarray}
\end{corollary}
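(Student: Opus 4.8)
The plan is to recognise both $\cE_G^{\IP}$ and $\cE_H^{\IP}$ as random-walk Dirichlet forms on Cayley graphs of \emph{one and the same group}, and then feed a lift of the given paths $\gamma_f$ into the Cayley-graph version of canonical paths (Remark~\ref{rk:caylay}). Set $\GG:=\fS(V_G)$; since $V_H=V_G$ we also have $\fS(V_H)=\GG$. Put $A:=\{\tau_e\colon e\in E_G\}$ and $B:=\{\tau_f\colon f\in E_H\}$; as an edge is recovered from its transposition, $e\mapsto\tau_e$ and $f\mapsto\tau_f$ identify $E_G$ with $A$ and $E_H$ with $B$. As noted just above the statement, $\cE_G^{\IP}=\cE^{\RW}_{\cay(\GG,A)}$ and $\cE_H^{\IP}=\cE^{\RW}_{\cay(\GG,B)}$, so by Remark~\ref{rk:caylay} it suffices to attach to each $b=\tau_f\in B$ a random word $\omega_b$ over $A$ whose evaluation in $\GG$ is $b$, and to check that the resulting congestion $\max_{a\in A}\sum_{b\in B}\EE\bigl[\,|\omega_b|\,N(a,\omega_b)\,\bigr]$ is at most $4\kappa$. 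Also, we may and do assume each $\gamma_f$ self-avoiding: replacing a path by a self-avoiding subpath with the same endpoints only decreases its length and shrinks its set of traversed edges, hence cannot increase the congestion $\kappa$ of Theorem~\ref{th:cano}.

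For the words I would use the lift already pointed to in the excerpt: given a realisation $\gamma_f=(\gamma_0,\ldots,\gamma_k)$ with traversed edges $e_1,\ldots,e_k$ (so $\{\gamma_0,\gamma_k\}$ are the endpoints of $f$), set
\[
\omega_{\tau_f}\ :=\ \bigl(\tau_{e_1},\ldots,\tau_{e_{k-1}},\tau_{e_k},\tau_{e_{k-1}},\ldots,\tau_{e_1}\bigr).
\]
The one thing to verify is that $\omega_{\tau_f}$ evaluates to $\tau_f=\tau_{\{\gamma_0,\gamma_k\}}$, which I would prove by a one-line induction on $k$. The base case $k=1$ is immediate. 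For the inductive step, write the evaluation as $\tau_{e_1}\,W'\,\tau_{e_1}$, where $W'$ is the evaluation of the analogous word built along the shorter self-avoiding path $(\gamma_1,\ldots,\gamma_k)$; by induction $W'=\tau_{\{\gamma_1,\gamma_k\}}$, and since $\tau_{e_1}=\tau_{\{\gamma_0,\gamma_1\}}$ swaps $\gamma_0\leftrightarrow\gamma_1$ while fixing $\gamma_k$ (here self-avoidance is used, so that $\gamma_k\notin\{\gamma_0,\gamma_1\}$), we get $\tau_{e_1}\tau_{\{\gamma_1,\gamma_k\}}\tau_{e_1}=\tau_{\{\gamma_0,\gamma_k\}}$. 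Thus $\omega_{\tau_f}$ is a legitimate random word over $A$ with evaluation $\tau_f$, and it has length $|\omega_{\tau_f}|=2|\gamma_f|-1\le 2|\gamma_f|$.

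It then remains to bound the congestion of this family of words. Since $\gamma_f$ is self-avoiding, it traverses any given $e\in E_G$ at most once, so the letter $\tau_e$ occurs in $\omega_{\tau_f}$ at most twice; that is, $N(\tau_e,\omega_{\tau_f})\le 2\,{\bf 1}_{(\gamma_f\textrm{ traverses }e)}$. Combining this with $|\omega_{\tau_f}|\le 2|\gamma_f|$ yields, for every $e\in E_G$,
\[
\sum_{f\in E_H}\EE\bigl[\,|\omega_{\tau_f}|\,N(\tau_e,\omega_{\tau_f})\,\bigr]\ \le\ 4\sum_{f\in E_H}\EE\bigl[\,|\gamma_f|\,{\bf 1}_{(\gamma_f\textrm{ traverses }e)}\,\bigr]\ \le\ 4\kappa ,
\]
using the identifications of $A$ with $E_G$ and of $B$ with $E_H$. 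Feeding this congestion bound into Remark~\ref{rk:caylay} gives $\cE^{\RW}_{\cay(\GG,B)}\le 4\kappa\,\cE^{\RW}_{\cay(\GG,A)}$, i.e. $\cE_H^{\IP}\le 4\kappa\,\cE_G^{\IP}$.

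I do not anticipate a real obstacle here: the corollary is a bookkeeping consequence of Remark~\ref{rk:caylay}, and is essentially already sketched in the paragraph preceding its statement. The only two spots worth writing carefully are the conjugation identity (which is what guarantees the lifted word lands on the correct transposition $\tau_f$) and the reduction to self-avoiding paths, which is what lets the count $N(\tau_e,\omega_{\tau_f})$ be replaced by the indicator ${\bf 1}_{(\gamma_f\textrm{ traverses }e)}$ figuring in the definition of $\kappa$.
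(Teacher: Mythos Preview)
Your proposal is correct and follows essentially the same route as the paper: identify $\cE_G^{\IP}$ and $\cE_H^{\IP}$ with random-walk Dirichlet forms on Cayley graphs of $\fS(V_G)$, lift each $\gamma_f$ to the palindromic word $(\tau_{e_1},\ldots,\tau_{e_k},\ldots,\tau_{e_1})$, and note that both the word length and the letter-count pick up a factor~$2$. Your explicit reduction to self-avoiding paths is a small but genuine improvement on the paper's sketch, since without it the bound $N(\tau_e,\omega_{\tau_f})\le 2$ is not literally true and one would need the congestion defined with a traversal \emph{count} rather than the indicator appearing in Theorem~\ref{th:cano}.
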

Combining this  with Remarks \ref{rk:crude} and \ref{rk:prod}, we obtain the following inequality, which reduces   the upper bound of Theorem \ref{th:main} to the extremal case where $G$ is the $(n,\ell)-$\emph{Hamming graph}: 
\begin{eqnarray}
\cK_{\ell}^n & := & \underbrace{\cK_\ell\times\cdots\times \cK_\ell}_{n\text{ times}}.
\end{eqnarray} 
\begin{corollary}\label{lm:reduc} For any $n-$dimensional connected product graph $G$ of side-length $\ell$, 
 \begin{eqnarray}
\cE^{\IP}_{\cK_{\ell}^n} & \le & \ell^3\,\cE^{\IP}_{G}.
 \end{eqnarray}
\end{corollary}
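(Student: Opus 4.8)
The plan is to apply Corollary \ref{co:cano} together with the product structure of the congestion established in Remark \ref{rk:prod}. The point is that both $\cK_\ell^n$ and $G = G_1 \times \cdots \times G_n$ are cartesian products of $n$ graphs on the common vertex set $\{1,\ldots,\ell\}$, so it suffices to compare, coordinatewise, the single-particle random walk on $\cK_\ell$ with that on the individual factor $G_i$, and then lift the resulting comparison to the interchange process.

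First I would fix a coordinate $i$ and compare $\cK_\ell$ to $G_i$ at the level of $\cE^{\RW}$ using Theorem \ref{th:cano} with $H = \cK_\ell$ and $G = G_i$. Since $G_i$ is connected on $\ell$ vertices, pick a spanning tree $T_i$ of $G_i$ and route each edge $f \in E_{\cK_\ell}$ along the unique simple path in $T_i$ joining the endpoints of $f$ (the non-random choice of Remark \ref{rk:crude}). Each such path has length at most $\ell - 1$, there are at most $\binom{\ell}{2} \le \ell^2/2$ edges in $\cK_\ell$, and every path uses each fixed edge of $T_i$ at most once; hence the congestion of this comparison is at most $(\ell-1) \cdot \binom{\ell}{2} \le \ell^3/2$. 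Actually, since each edge $e$ of $G_i$ either lies in $T_i$ or not — and only tree edges are ever traversed — one gets the cleaner bound $\kappa_i \le \binom{\ell}{2}(\ell-1)$; a crude $\kappa_i \le \ell^3$ (or even $\le \ell^3/4$ as in Remark \ref{rk:crude}) certainly holds, and I would not try to optimize the constant.

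Next I would invoke Remark \ref{rk:prod}: since each factor $G_i$ is compared to $\cK_\ell$ with congestion $\kappa_i \le \ell^3/4$ (using paths moving along a single coordinate), the product $G = G_1 \times \cdots \times G_n$ is compared to $\cK_\ell \times \cdots \times \cK_\ell = \cK_\ell^n$ with congestion $\kappa = \max_i \kappa_i \le \ell^3/4$, yielding $\cE^{\RW}_{\cK_\ell^n} \le (\ell^3/4)\,\cE^{\RW}_G$. Finally, applying Corollary \ref{co:cano} (which says the $\RW$-comparison lifts to the $\IP$-comparison at the cost of a factor $4$) gives $\cE^{\IP}_{\cK_\ell^n} \le 4 \cdot (\ell^3/4)\,\cE^{\IP}_G = \ell^3\,\cE^{\IP}_G$, which is exactly the claimed bound. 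I do not anticipate any genuine obstacle here — this corollary is really just bookkeeping, assembling Theorem \ref{th:cano}, Remarks \ref{rk:crude} and \ref{rk:prod}, and Corollary \ref{co:cano}; the only mild care needed is in checking that the spanning-tree paths in a single $\ell$-vertex factor have congestion bounded purely in terms of $\ell$ (independent of $n$), which is immediate, and in tracking the two factors of $2$ and one factor of $4$ so that the final constant comes out to $\ell^3$ rather than something slightly larger.
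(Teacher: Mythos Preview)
Your proposal is correct and follows exactly the paper's approach: the paper simply states that Corollary~\ref{lm:reduc} follows by combining Corollary~\ref{co:cano} with Remarks~\ref{rk:crude} and~\ref{rk:prod}, and you have spelled out precisely this combination --- spanning-tree paths in each $\ell$-vertex factor give congestion $\kappa_i\le \ell^3/4$ (Remark~\ref{rk:crude}), the product structure yields $\kappa=\max_i\kappa_i\le \ell^3/4$ (Remark~\ref{rk:prod}), and the factor~$4$ from lifting to \IP\ (Corollary~\ref{co:cano}) gives the stated $\ell^3$.
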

In light of this result, the upper bound in Theorem \ref{th:main} now boils down to the claim
\begin{eqnarray}
\sup_{n\ge 1}\left\{\frac{\comp^{\IP}_{\cK_{\ell}^n}}{\ell^{n}}\right\}& < &  \infty,
\end{eqnarray}
for each $\ell\ge 2$, to which the remainder of the paper is devoted. 
\subsection{The octopus inequality}\label{sec:octopus}
From now on, we fix the side-length $\ell\ge 2$ and the  dimension $n\ge 1$. Writing $\cK$ for the complete graph on $\{1,\ldots,\ell\}^n$, our goal is to establish the comparison
\begin{eqnarray}
 \label{task}
\cE^\IP_{\cK} & \le & c\,\ell^n\, \cE^\IP_{\cK_{\ell}^n},
\end{eqnarray}
where $c$  does not depend on $n$. We start by observing that the random walks on $\cK_{\ell}^n$  and on $\cK$ can both be conveniently viewed as random walks on the group 
\begin{eqnarray}
\GG & := & \Z_\ell^n,
\end{eqnarray}
equipped with coordinate-wise addition mod $\ell$ (which we will simply denote by $+$). Given a probability measure $\mu$ on $\GG$, we recall that the random walk with increment law  $\mu$ has Dirichlet form  
\begin{eqnarray}
\cE^\RW_\mu(f) & := & \frac{1}{2|\GG|}\sum_{x,z\in\GG}\mu(z)\left(f(x+z)-f(x)\right)^2,
\end{eqnarray}
for all $f\colon \GG\to\R$.  In particular,  we have the representation
\begin{eqnarray}
\label{representation}
\cE^\RW_\cK \  = \ (\ell^n-1)\cE_\pi^\RW, & \qquad & \cE^\RW_{\cK_{\ell}^n} \ = \ n(\ell-1)\cE_{\rho_1}^\RW,
\end{eqnarray}
where $\pi$ and $\rho_k$ $(0\le k\le n)$ respectively denote the uniform distributions on $\GG$ and on
\begin{eqnarray}
\GG_k & := & \left\{x\in\GG\colon |\supp(x)|=k\right\}.
\end{eqnarray}
Here $\supp(x)=\{i\colon x_i\ne 0\}$ naturally denotes the support of $x=(x_1,\ldots,x_n)\in \GG$.  
Similarly,   the  \IP\ on $\GG$ with increment law $\mu$ has Dirichlet form  
\begin{eqnarray}
\cE^\IP_\mu(f) & := & \frac{1}{2|\GG|!}\sum_{\sigma\in\fS(\GG)}\sum_{x,z\in\GG}\mu(z)\left(f(\sigma\tau_{\{x,x+z\}})-f(\sigma)\right)^2,
\end{eqnarray}
for $f\colon \fS(\GG)\to\R$, with the interpretation $\tau_{\{x,x+z\}}=\textrm{id}$ when $z=0$. In view of (\ref{representation}) (with $\IP$ instead of $\RW$), our claim (\ref{task}) rewrites as
\begin{eqnarray}
\label{reform:1}
\cE^\IP_{\pi} & \le & c \, n\, \cE^\IP_{\rho_1},
\end{eqnarray}
for some (possibly different) constant $c<\infty$ that is only allowed to depend on $\ell$.
The proof  will crucially rely on the following elegant application of the octopus inequality \cite[Theorem 2.3]{Caputo}, which  we borrow from Alon and Kozma \cite{AK}. We include a short proof as our setting is here slightly different. 
The convolution of two probability measures  $\mu,\nu$   on $\GG$ is defined by
\begin{eqnarray}
(\mu\star\nu)(x) & := & \sum_{z\in\GG}\mu(z)\nu(x-z).
\end{eqnarray}
Also, we say that a measure $\mu$ on $\GG$ is symmetric if $\mu(z)=\mu(-z)$ for all $z\in\GG$.

\begin{lemma}[Comparison for convolutions]\label{lm:convo}For any symmetric probability measure $\mu$ on $\GG$, 
\begin{eqnarray}
\cE_{\mu\star\mu}^\IP & \le & 2\cE_\mu^{\IP}.
\end{eqnarray}
\end{lemma}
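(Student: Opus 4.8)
The plan is to deduce Lemma~\ref{lm:convo} directly from the octopus inequality of Caputo, Liggett and Richthammer \cite[Theorem 2.3]{Caputo}, following the strategy of Alon and Kozma but adapted to our group-walk setting. Recall that the octopus inequality states that for any finite graph with a distinguished vertex $0$ whose neighbours are $z_1,\dots,z_d$ (here these will be the elements $z$ in the support of $\mu$, weighted by $\mu(z)$), and any $f\colon\fS(\GG)\to\R$,
\begin{eqnarray}
\sum_{i,j}\mu(z_i)\mu(z_j)\,\big(f(\sigma\tau_{\{0,z_i\}}\tau_{\{0,z_j\}}) - f(\sigma)\big)\text{-type terms}
\end{eqnarray}
are controlled by the ``star'' terms $\sum_i \mu(z_i)(f(\sigma\tau_{\{0,z_i\}})-f(\sigma))^2$; more precisely, after averaging over $\sigma$ and over translations the quadratic form built from products of two transpositions through a common vertex is dominated by twice the single-transposition Dirichlet form. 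So the first step is to record the precise translated/averaged form of the octopus inequality that we need.

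Second, I would expand $\cE^\IP_{\mu\star\mu}(f)$. Since $(\mu\star\mu)(w)=\sum_{z}\mu(z)\mu(w-z)$, writing $w=z+z'$ gives
\begin{eqnarray}
\cE^\IP_{\mu\star\mu}(f) = \frac{1}{2|\GG|!}\sum_{\sigma}\sum_{x}\sum_{z,z'}\mu(z)\mu(z')\big(f(\sigma\tau_{\{x,x+z+z'\}})-f(\sigma)\big)^2.
\end{eqnarray}
The key algebraic identity is that the transposition $\tau_{\{x,\,x+z+z'\}}$ can be written as a conjugate/product of the two ``elementary'' transpositions $\tau_{\{x,x+z\}}$ and $\tau_{\{x+z,x+z+z'\}}$ (or, after relabelling via the symmetry of $\mu$, of two transpositions sharing the common vertex $x+z$): indeed $\tau_{\{a,c\}} = \tau_{\{a,b\}}\tau_{\{b,c\}}\tau_{\{a,b\}}$, and more usefully for the octopus bound, $f(\sigma\tau_{\{a,c\}})-f(\sigma)$ can be linearly combined from $f(\sigma\tau_{\{a,b\}}\tau_{\{b,c\}})-f(\sigma)$ and its variants. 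Using the elementary inequality $(u+v)^2\le 2u^2+2v^2$ (or a telescoping through the intermediate permutation), one bounds the summand by a sum of squared increments each involving a single transposition of the form $\tau_{\{y,y+z\}}$ with $z\in\supp\mu$. Summing over $\sigma$, $x$, $z$, $z'$ and using the symmetry of $\mu$ together with the translation-invariance of the sum over $x\in\GG$ and $\sigma\in\fS(\GG)$, the cross terms reorganize exactly into the octopus quadratic form centered at each vertex, and the octopus inequality collapses them into $\cE^\IP_\mu(f)$ with the stated constant $2$.

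The step I expect to be the main obstacle is getting the combinatorial bookkeeping of the conjugation identity to line up \emph{exactly} with the hypotheses of the octopus inequality — in particular, making sure that after re-indexing, the pairs of transpositions that appear genuinely share a common vertex (so that Caputo's ``octopus'' centered at that vertex applies), and that the weights $\mu(z)\mu(z')$ are distributed correctly over the two legs of each octopus, uniformly in the choice of center. One must also verify that the averaging over $\sigma\in\fS(\GG)$ (left-multiplication invariance of counting measure) lets us fix the center of the octopus without loss of generality, absorbing the translation by $x$ and $z$. Once the identity is set up cleanly, the octopus inequality is applied verbatim and no further estimates are needed; the factor $2$ is exactly the constant appearing in \cite[Theorem 2.3]{Caputo}. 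This is why the authors remark that only a ``short proof'' is required — the content is entirely in correctly invoking the octopus inequality, and the risk is purely in the index manipulation rather than in any new inequality.
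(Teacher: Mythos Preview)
Your plan misreads what the octopus inequality actually delivers, and the detour through conjugation and $(u+v)^2\le 2u^2+2v^2$ is both unnecessary and lossy. The octopus inequality does \emph{not} bound a quadratic form built from products $\tau_{\{0,z_i\}}\tau_{\{0,z_j\}}$ of two transpositions; rather, for a weighted star centered at a vertex $x$, it directly compares the star form $\sum_z \mu(z)\big(f(\sigma\tau_{\{x,x+z\}})-f(\sigma)\big)^2$ with the leaf-to-leaf form $\tfrac12\sum_{u,v}\mu(u)\mu(v)\big(f(\sigma\tau_{\{x+u,\,x+v\}})-f(\sigma)\big)^2$, each term of which already involves a \emph{single} transposition. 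There is therefore no need to write $\tau_{\{x,x+z+z'\}}$ as a conjugate $\tau_{\{a,b\}}\tau_{\{b,c\}}\tau_{\{a,b\}}$ (a length-$3$ word, which after $(u+v+w)^2\le 3(u^2+v^2+w^2)$ would already ruin the constant $2$), nor to telescope through an intermediate permutation. The step you flag as ``bookkeeping'' is not the obstacle; the obstacle is that your intermediate objects are the wrong ones.

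The paper's argument is much shorter: apply the octopus inequality verbatim at each center $x$ (assuming first $\mu(0)=0$), average over $x\in\GG$, and on the right-hand side perform the bijective change of variables $(x,u,v)\mapsto(x+u,-u,v-u)$. Symmetry of $\mu$ turns $\mu(-u)\mu(v-u)$ summed over $u$ into $(\mu\star\mu)(v)$, and the transposition $\tau_{\{x+u,x+v\}}$ becomes $\tau_{\{x,x+v\}}$; this \emph{is} $\cE^{\IP}_{\mu\star\mu}(f)$ on the nose, with the factor $2$ coming straight from the octopus constant. You also need to treat the case $\mu(0)>0$ separately (the octopus inequality as stated requires distinct leaves): write $\mu=(1-\theta)\rho_0+\theta\nu$ with $\nu(0)=0$, note $\cE^{\IP}_\mu=\theta\cE^{\IP}_\nu$ and $\cE^{\IP}_{\mu\star\mu}=\theta^2\cE^{\IP}_{\nu\star\nu}+2\theta(1-\theta)\cE^{\IP}_\nu$, and reduce to the previous case.
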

\begin{proof}If $\mu(0)=0$, the octopus inequality \cite[Theorem 2.3]{Caputo} asserts that  
\begin{eqnarray*}
\sum_{\sigma\in\fS(\GG),z\in\GG}\mu(z)\left(f(\sigma\tau_{\{x,x+z\}})-f(\sigma)\right)^2 & \ge & \frac{1}{2}\sum_{\sigma\in\fS(\GG),(u,v)\in\GG^2}\mu(u)\mu(v)\left(f(\sigma\tau_{\{x+u,x+v\}})-f(\sigma)\right)^2,
\end{eqnarray*}
for all $f\colon\fS(\GG)\to\R$ and  $x\in\GG$ (the factor $\frac 12$ on the right-hand side compensates for the fact that we are here summing over all \underline{ordered} pairs $(u,v)\in\GG^2$). Averaging over all $x\in\GG$,  and applying the (bijective) change of variables $(x,u,v)\mapsto(x+u,-u,v-u)$ on the right-hand side, we obtain
\begin{eqnarray}
2\cE_\mu^\IP(f) & \ge &  \frac{1}{2|\GG|!}\sum_{\sigma\in\fS(\GG),(x,u,v)\in\GG^3}\mu(-u)\mu(v-u)\left(f(\sigma\tau_{\{x,x+v\}})-f(\sigma)\right)^2,
\end{eqnarray}
which is precisely $2\cE_\mu^\IP(f)\ge \cE_{\mu\star\mu}^\IP(f)$ by symmetry of $\mu$. This proves the claim when $\mu(0)=0$. In the general case, we write $\mu=(1-\theta)\rho_0+\theta\nu$ with ${\nu}(0)=0$, and we observe that
\begin{eqnarray*}
\cE_\mu^\IP \ = \  \theta\cE_\nu^\IP, & \qquad & 
\cE_{\mu\star\mu}^\IP \ = \ \theta^2\cE_{\nu\star\nu}^\IP+2\theta(1-\theta)\cE_{\nu}^\IP.
\end{eqnarray*}
Thus, the claim $\cE_{\mu\star\mu}^\IP\le 2\cE_\mu^\IP$ is equivalent to $\cE_{\nu\star\nu}^\IP\le 2\cE_\nu^\IP$.
\end{proof}
For reasons that will become clear later, we henceforth set
\begin{eqnarray}
\label{def:t}t & := & 2^{\lceil \log_2 n\rceil} \ \in \ [n,2n]\\
\label{def:theta}\theta & := & n\left(1-e^{-\frac{\ln 2}{t}}\right) \ \in \ \left(0,1\right)\\
\label{def:p}p & := & \frac{\ell-1}{\ell}.
\end{eqnarray}
Let us introduce the measure $\mu$ defined by
\begin{eqnarray}
\mu  & := & (1-\theta p)\rho_0+\theta p\rho_1.
\end{eqnarray}
Since $\mu$ is symmetric and $t$ is a power of $2$, we may iterate Lemma \ref{lm:convo} to get
\begin{eqnarray}
\cE_{\mu^{\star t}}^\IP & \le & t\cE_{\mu}^\IP \ = \ \theta pt\cE_{\rho_1}^\IP \ \le \ 2n\cE_{\rho_1}^{\IP}.
\end{eqnarray}
where $\mu^{\star t}=\mu\star\cdots\star\mu$ denotes the $t-$fold convolution of $\mu$. 
Thus, our goal (\ref{reform:1}) now boils down to showing that 
\begin{eqnarray}
\label{strategy}
\cE_\pi^\IP & \le & 
c\, \cE_{\mu^{\star t}}^\IP
\end{eqnarray}
for some constant $c<\infty$ that only depends  on $\ell$. To this end, we   analyze the convolution $\mu^{\star t}$ accurately using the de Moivre - Laplace Local Limit Theorem.
\subsection{Local Limit Theorem}
\label{sec:LLT}
As a warm-up, consider the binomial expansion of the uniform law: $\pi  =  \sum_{k=0}^n b_k\rho_k$, where 
\begin{eqnarray}
b_k & := & {n \choose k}\left(1-p\right)^kp^{n-k}.
\end{eqnarray}
The classical de Moivre - Laplace Local Limit Theorem provides uniform estimates on the coefficients $b_0,\ldots,b_n$. Although a specific value of $p$ was chosen at (\ref{def:p}), the statement is of course valid for any $p\in(0,1)$.
\begin{theorem}[de Moivre - Laplace]\label{th:LLT}There is $C<\infty$ depending only on $p$ such that 
\begin{eqnarray}
\left|b_k - \frac{e^{-\frac{x^2}{2}}}{\sqrt{2\pi np(1-p)}}\right| & \le & \frac{C}{n^{3/2}},
\end{eqnarray}
for all $0\le k\le n$, with $x=(k-np)/\sqrt{np(1-p)}$.
\end{theorem}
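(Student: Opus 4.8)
The statement to prove is the de Moivre–Laplace Local Limit Theorem: for fixed $p \in (0,1)$ there is $C < \infty$ such that
$$
\left| b_k - \frac{e^{-x^2/2}}{\sqrt{2\pi n p(1-p)}} \right| \le \frac{C}{n^{3/2}}, \qquad b_k = \binom{n}{k}(1-p)^k p^{n-k},
$$
uniformly over $0 \le k \le n$, where $x = (k - np)/\sqrt{np(1-p)}$.

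\bigskip

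\noindent\textbf{Proof proposal.}

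The plan is to prove this by a direct Stirling-formula computation, splitting into the bulk regime where $k$ is within $C' \sqrt{n \log n}$ of $np$ and the two tails where it is not. First I would fix notation: set $q = 1-p$, $\sigma^2 = npq$, and write $k = np + t$ with $t = x\sigma$, so that $n - k = nq - t$. In the bulk regime, say $|t| \le n^{2/3}$ (which is more than enough and keeps $k$ and $n-k$ both of order $n$), I would apply Stirling's formula with explicit error bounds, $m! = \sqrt{2\pi m}\,(m/e)^m e^{\theta_m}$ with $|\theta_m| \le 1/(12m)$, to each of $n!$, $k!$, $(n-k)!$. This gives
$$
b_k = \frac{1}{\sqrt{2\pi}} \sqrt{\frac{n}{k(n-k)}}\; \exp\!\Big( -k \log\frac{k}{np} - (n-k)\log\frac{n-k}{nq} + O(1/n) \Big).
$$
Then I would Taylor-expand the exponent. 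Writing $k/(np) = 1 + t/(np)$ and $(n-k)/(nq) = 1 - t/(nq)$, and using $(1+u)\log(1+u) = u + u^2/2 - u^3/6 + O(u^4)$, the linear terms cancel by design, the quadratic terms combine to $-t^2/(2npq) = -x^2/2$, and the cubic and higher terms are $O(|t|^3/n^2) = O(n^{-1/2})$ once we further restrict to, say, $|t| \le n^{1/2}\log n$ — which is where the stated $n^{-3/2}$-strength bound actually lives; outside this sub-window the Gaussian is itself $O(n^{-3/2})$ (see below), so no precision is lost. Similarly $\sqrt{n/(k(n-k))} = (npq)^{-1/2}(1 + O(|t|/n))$, contributing another $O(n^{-1/2})$ relative error. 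Multiplying out, $b_k = \frac{e^{-x^2/2}}{\sqrt{2\pi npq}}\big(1 + O((1+|t|^3)/n + |t|/n)\big)$, and since $e^{-x^2/2}/\sqrt{2\pi npq} = O(n^{-1/2})$, the absolute error is $O(n^{-1/2} \cdot n^{-1/2} \cdot (\log n)^3) $ — which is not quite $n^{-3/2}$. To recover the claimed exponent cleanly I would instead split the bulk at $|t| \le A\sqrt{n \log n}$ for a suitable constant $A = A(p)$: there the cubic error is $O((n\log n)^{3/2}/n^2) = O((\log n)^{3/2} n^{-1/2})$ and again the prefactor $e^{-x^2/2}$ is at most $1$, but one sharpens using $e^{-x^2/2} \le e^{-x^2/2}$ to absorb the $(\log n)^{3/2}$ — concretely, choosing $A$ large enough that $e^{-x^2/2} \le n^{-1}$ whenever $|t| = A\sqrt{n\log n}$, and noting the error is monotone-controlled, gives the product bounded by $C n^{-3/2}$ throughout $|t| \le A\sqrt{n\log n}$.

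\medskip

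In the tail regime $|t| > A\sqrt{n\log n}$ I must separately show both $b_k$ and $e^{-x^2/2}/\sqrt{2\pi npq}$ are $O(n^{-3/2})$. The Gaussian term is immediate: there $x^2/2 > (A^2/(2pq))\log n$, so choosing $A$ with $A^2/(2pq) \ge 1$ makes it $\le n^{-1}/\sqrt{2\pi npq} = O(n^{-3/2})$. For $b_k$ itself I would use a Chernoff/large-deviation bound for the binomial: $b_k \le \PP(\mathrm{Bin}(n,p) = k)$ is dominated by $\PP(|\mathrm{Bin}(n,p) - np| \ge A\sqrt{n\log n}) \le 2\exp(-c A^2 \log n) = 2 n^{-cA^2}$ for a constant $c = c(p) > 0$ (e.g. from Bernstein's inequality), which is $\le n^{-3/2}$ once $A$ is large. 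Taking $A = A(p)$ large enough to satisfy all the constraints above (making both tails negligible and the Gaussian sub-window argument work) completes the proof, with $C$ depending only on $p$.

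\medskip

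The main obstacle — really the only delicate point — is bookkeeping the error terms carefully enough to land the exponent $3/2$ rather than a weaker $1/2$ or $1/2 - \varepsilon$: naively one gets relative error $O(1/\sqrt n)$ in the bulk, hence absolute error $O(1/n)$, and squeezing out the extra $1/\sqrt n$ requires exploiting the Gaussian decay of the leading term to kill the logarithmic factors that appear when the window is taken as wide as $\sqrt{n\log n}$. Everything else is a routine, if somewhat tedious, application of Stirling with explicit remainders and a standard binomial tail bound; since the paper only ever uses the theorem to sum $b_k$ against bounded quantities, even a slightly weaker error term would in fact suffice, so I would not belabor the optimization.
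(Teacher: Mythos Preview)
The paper does not supply a proof of this theorem; it is stated as the classical de Moivre--Laplace result and used as a black box, so there is no argument of the paper's to compare against.

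Your attempt runs into a real obstruction, and it is not merely a bookkeeping issue: the bound as stated, with error $C/n^{3/2}$, is \emph{false} for $p\ne 1/2$. The first Edgeworth correction gives
\[
b_k \;=\; \frac{e^{-x^2/2}}{\sqrt{2\pi npq}}\left(1+\frac{(1-2p)(x^3-3x)}{6\sqrt{npq}}+O\!\left(\frac{1+x^4}{n}\right)\right),
\]
so for $x$ of order $1$ (where $e^{-x^2/2}\asymp 1$) the discrepancy is genuinely $\Theta(n^{-1})$. Your Stirling computation correctly produces the cubic term $O(|t|^3/n^2)$ in the exponent, which for $|t|\asymp\sqrt{n}$ is $\Theta(n^{-1/2})$ and, against the $\Theta(n^{-1/2})$ prefactor, yields absolute error $\Theta(n^{-1})$. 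The proposed fix --- absorbing the excess via the decay of $e^{-x^2/2}$ --- cannot work in the central window $|x|=O(1)$, where that factor is bounded below; the phrase ``monotone-controlled'' hides exactly this gap. The sharp uniform bound for general $p$ is $C/n$, which your argument (stripped of the attempted sharpening) does establish; the exponent $3/2$ is only attainable when $p=1/2$, i.e.\ $\ell=2$. Your closing remark is spot-on and dissolves the issue: every application the paper makes of this theorem --- bounding $|\cI|\max_{k\in\cI}b_k$ above and bounding $b_k$ below by $c/\sqrt n$ near the mean --- already follows from the $O(n^{-1})$ version, so the overstated exponent is a harmless slip in the paper.
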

We can use this to approximate $\cE_{\pi}$ with $\cE_{\rho_\cI}$, where $\rho_\cI$ is defined as follows:
\begin{eqnarray}
\rho_{\cI} & := & \frac{1}{|\cI|}\sum_{k\in \cI}\rho_k;\\
\cI & := & \left(np-2\sqrt{np(1-p)},np+2\sqrt{np(1-p)}\right)\cap\{0,\ldots,n\}.
\end{eqnarray}
\begin{lemma}[Plateau proxy for $\cE_{\pi}^\IP$]\label{lm:plateaupi}
There is $c<\infty$ depending on $\ell$ only, such that
\begin{eqnarray}
\cE_{\pi}^\IP & \le & c\,\cE_{\rho_\cI}^\IP.
\end{eqnarray}
\end{lemma}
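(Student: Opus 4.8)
The plan is to express both $\pi$ and $\rho_\cI$ as nonnegative combinations of the ``layers'' $\rho_0,\ldots,\rho_n$ and to show that, up to a bounded factor, $\pi$ puts no more mass on any layer $\rho_k$ than a bounded multiple of $\rho_\cI$ does, at least for the layers $\rho_\cI$ actually charges; the remaining ``tail'' layers of $\pi$ will be handled separately by a cheap canonical-path argument. Concretely, writing $\pi=\sum_{k=0}^n b_k\rho_k$ and $\rho_\cI=\frac{1}{|\cI|}\sum_{k\in\cI}\rho_k$, Theorem \ref{th:LLT} gives $b_k=\Theta(1/\sqrt n)$ uniformly for $k\in\cI$ (since $|x|\le 2$ there makes $e^{-x^2/2}$ bounded above and below), and $|\cI|=\Theta(\sqrt n)$, so $b_k\le c_\ell/|\cI|$ for every $k\in\cI$ with a constant $c_\ell$ depending only on $p$, hence only on $\ell$. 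Since the map $\mu\mapsto\cE_\mu^\IP$ is linear and monotone in $\mu$ (each layer contributes a nonnegative quadratic form), this immediately yields $\sum_{k\in\cI}b_k\,\cE_{\rho_k}^\IP\le c_\ell\,\cE_{\rho_\cI}^\IP$.

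The only remaining contribution to $\cE_\pi^\IP$ is $\sum_{k\notin\cI}b_k\,\cE_{\rho_k}^\IP$. The first idea is that $\sum_{k\notin\cI}b_k\le P(|\mathrm{Bin}(n,1-p)-n(1-p)|>2\sqrt{np(1-p)})$, which is bounded away from $1$ (Chebyshev gives $\le 1/4$), so this tail is a bounded fraction of the total mass; but this alone is not enough, because we need to \emph{dominate} each $\cE_{\rho_k}^\IP$ for $k\notin\cI$ by $\cE_{\rho_\cI}^\IP$, not just note that the coefficients are small. For that, I would produce, for each $k\in\{0,\ldots,n\}$, a comparison $\cE_{\rho_k}^\IP\le C_\ell\,\cE_{\rho_\cI}^\IP$ with $C_\ell$ independent of $k$ and $n$. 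This is exactly the kind of statement the canonical-path machinery of Section \ref{sec:cano} delivers: via Corollary \ref{co:cano} it suffices to build, for each ``edge'' in $\cay(\GG,\GG_k)$ — i.e.\ each move by a vector of support size $k$ — a bounded-length random word over $\bigcup_{j\in\cI}\GG_j$ whose evaluation is that vector and with bounded congestion. A move $x\mapsto x+z$ with $|\supp(z)|=k$ can be realised as a short path in $\cay(\GG,\bigcup_{j\in\cI}\GG_j)$: split the (randomly ordered) coordinates of $\supp(z)$ into consecutive blocks, each of a size chosen uniformly in, say, $[\,\lceil n p\rceil - \lceil\sqrt{np(1-p)}\rceil,\ \lceil np\rceil\,]\subseteq\cI$, using $O(1)$ blocks since $|\cI|=\Theta(\sqrt n)$ and $k\le n$ — one checks $k\le n$ needs only $O(1/p)=O(1)$ blocks of size $\Theta(\sqrt n)\cdot$… wait, that gives $O(\sqrt n)$ blocks; instead I would use that each single generator in $\GG_j$ with $j\in\cI$ already changes $\Theta(n)$ coordinates, so $O(1)$ steps suffice to change any prescribed set of $\le n$ coordinates, with intermediate steps correcting overshoot. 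Because $\supp(z)$ has size $k$ and each step touches $\Theta(n)$ uniformly random coordinates, the expected number of times a fixed generator $a\in\GG_j$ is used, weighted by path length, is $O(1)$ after averaging, giving congestion $\kappa=O_\ell(1)$ and hence $\cE_{\rho_k}^\IP\le C_\ell\,\cE_{\rho_\cI}^\IP$ uniformly in $k$.

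Putting the pieces together: $\cE_\pi^\IP=\sum_{k\in\cI}b_k\cE_{\rho_k}^\IP+\sum_{k\notin\cI}b_k\cE_{\rho_k}^\IP\le c_\ell\,\cE_{\rho_\cI}^\IP+\big(\sum_{k\notin\cI}b_k\big)C_\ell\,\cE_{\rho_\cI}^\IP\le c'_\ell\,\cE_{\rho_\cI}^\IP$, which is the claim with a constant depending only on $\ell$. The step I expect to be the genuine obstacle is the uniform-in-$k$ canonical-path bound $\cE_{\rho_k}^\IP\le C_\ell\,\cE_{\rho_\cI}^\IP$ for the \emph{tail} indices $k$ (especially the extremes $k=0$, where $\rho_0=\delta_{\mathrm{id}}$ contributes nothing, and $k=n$): one must choose the random decomposition of a support-$k$ move into a bounded number of support-in-$\cI$ moves so that no single generator $a$ is overloaded, and verify that the congestion sum in Remark \ref{rk:caylay} — which runs over \emph{all} $b$ in the target generating set $\GG_k$ — stays $O_\ell(1)$ uniformly in $n$; symmetrising the construction (so the word is a palindrome, per Corollary \ref{co:cano}) and exploiting that a uniformly random support-$j$ vector with $j\in\cI$ hits any fixed coordinate with probability $\Theta(1)$ is what makes the counting come out bounded. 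Everything else is linearity and monotonicity of $\mu\mapsto\cE_\mu^\IP$ together with the de Moivre--Laplace estimates already stated.
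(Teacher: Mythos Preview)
Your treatment of the layers $k\in\cI$ is correct and is exactly what the paper does in its final step: the de Moivre--Laplace estimate gives $b_k\le c_\ell/|\cI|$ uniformly on $\cI$, so $\sum_{k\in\cI}b_k\,\cE_{\rho_k}^\IP\le c_\ell\,\cE_{\rho_\cI}^\IP$ by linearity.

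The gap is the tail. You propose a uniform bound $\cE_{\rho_k}^\IP\le C_\ell\,\cE_{\rho_\cI}^\IP$ for all $k\notin\cI$ via canonical paths, but you do not actually construct the paths, and your own sketch wavers (``wait, that gives $O(\sqrt n)$ blocks; instead I would use\ldots''). Realising a support-$k$ move with $k$ small (say $k=1$) as a length-$O(1)$ word over $\bigcup_{j\in\cI}\GG_j$ forces massive cancellation between steps of support $\Theta(n)$, and you would need to check that the congestion sum over \emph{all} $b\in\GG_k$ stays bounded after the normalisations $|\GG_k|$ and $|\GG_j|$ are unwound; none of this is done. As written, this is a plan for a lemma, not a proof, and you yourself flag it as ``the genuine obstacle''.

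The paper sidesteps the tail entirely with a two-line trick you are missing. Let $\nu$ be the normalised restriction of $\pi$ to the layers $\bigcup_{k\in\cI}\GG_k$, so $\nu=\frac1q\sum_{k\in\cI}b_k\rho_k$ with $q=\sum_{k\in\cI}b_k\ge 3/4$ by Chebyshev. A short Cauchy--Schwarz computation (valid for any symmetric $\nu$) gives $(\nu\star\nu)(x)\ge(1-2\dtv(\nu,\pi))^2\pi(x)$ for all $x$, hence $\nu\star\nu\ge \pi/4$ pointwise since $\dtv(\nu,\pi)=1-q\le 1/4$. This yields $\cE_\pi^\IP\le 4\,\cE_{\nu\star\nu}^\IP$, and Lemma~\ref{lm:convo} (the octopus comparison) turns this into $\cE_\pi^\IP\le 8\,\cE_\nu^\IP$. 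Since $\nu$ is supported on the $\cI$-layers, the last step is exactly your ``$k\in\cI$'' bound. No canonical paths for the tail are needed at all: the convolution $\nu\star\nu$ smooths the truncated measure back over all of $\GG$, and the octopus inequality pays only a factor $2$ for undoing the convolution.
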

\begin{proof} If $\nu$ is a symmetric distribution on $\GG$, the Cauchy-Schwartz inequality  yields
\begin{eqnarray*}
\sqrt{\frac{(\nu\star\nu)(x)}{\pi(x)}}
& \ge & \sum_{z\in\GG}\sqrt{\nu(z)\nu(z-x)}\\
& \ge & \sum_{z\in\GG}\nu(z)\wedge\nu(z-x)\\
& \ge & \sum_{z\in\GG}\left[\nu(z)-\left(\nu(z)-\frac{1}{|\GG|}\right)_+-\left(\frac{1}{|\GG|}-\nu(z-x)\right)_+\right]\\
& = & 1-2\dtv(\nu,\pi),
\end{eqnarray*}
for all $x\in\GG$, where $\dtv(\cdot,\cdot)$ denotes the total-variation distance. In particular, when $\dtv(\nu,\pi)\le \frac{1}{4}$,  we obtain 
$(\nu\star\nu)(x) \ge \pi(x)/4$ for all $x\in\GG$ and hence
\begin{eqnarray}
\label{lm:tv}
\cE_{\pi}^\IP & \le & 4\cE_{\nu\star\nu}^\IP.
\end{eqnarray}
Let us now apply this general observation to the restriction of $\pi$ to $\bigcup_{k\in \cI}\GG_k$:
\begin{eqnarray}
\nu \ := \ \frac{1}{q}\sum_{k\in \cI} b_k\rho_k, & \qquad & q\ :=\ \sum_{k\in \cI}b_k.
\end{eqnarray}
Note that $\dtv(\nu,\pi)=1-q$, and that  $q\ge 3/4$ thanks to our definition of $\cI$ and Chebychev's inequality for the Binomial$(n,p)$. Thus, (\ref{lm:tv}) applies and yields
\begin{eqnarray*}
\cE_{\pi}^\IP & \le & 4\cE_{\nu\star\nu}^\IP\\
& \le & 8\cE_\nu^\IP\\ & = & \frac{8}{q}\sum_{k\in\cI}b_k\cE_{\rho_k}^\IP\\ 
& \le & \frac{32|\cI|}{3}\left(\max_{k\in\cI}b_k\right)\cE_{\rho_\cI}^\IP,
\end{eqnarray*}
where the second inequality uses Lemma \ref{lm:convo}, and the third $q\ge 3/4$. Finally, Theorem \ref{th:LLT} ensures that $|\cI|\max_{k\in \cI}b_k$ is bounded by a quantity which only depends on $p$.
\end{proof}

 In order to establish (\ref{strategy}), we will now approximate $\mu^{\star t}$ by the distribution
\begin{eqnarray}
\rho_{\cJ} & := & \frac{1}{|\cJ|}\sum_{k\in \cJ}\rho_k;\\
\cJ & := & \left(\frac{np}2-2\sqrt{np(1-p)},\frac{np}2+2\sqrt{np(1-p)}\right)\cap\{0,\ldots,n\}.
\end{eqnarray}
Note that the center of $\cJ$ is twice smaller than that of $\cI$. 
\begin{lemma}[Plateau proxy for $\mu^{\star t}$]\label{lm:plateaumu}There is $c>0$ depending on $\ell$ only, such that
\begin{eqnarray}
\mu^{\star t} & \ge & c\,  \rho_\cJ.
\end{eqnarray}
\end{lemma}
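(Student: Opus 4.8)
The plan is to pin down the structure of $\mu^{\star t}$ by a direct combinatorial computation of its mass on $\GG_k$, and then to apply the local limit theorem exactly as in Lemma \ref{lm:plateaupi}. Recall that $\mu=(1-\theta p)\rho_0+\theta p\rho_1$ acts on one coordinate at a time: a single step of the associated walk either does nothing (with probability $1-\theta p$) or picks a uniformly random coordinate and adds to it a uniformly random nonzero element of $\Z_\ell$ (jointly with probability $\theta p$). Over $t$ independent steps, the number of steps that ``hit'' a fixed coordinate $i$ is distributed as $\mathrm{Bin}(t,\theta p/n)$, and these counts are negatively associated across coordinates (they are a multinomial allocation together with a ``do nothing'' outcome). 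The key observation is the choice of parameters \eqref{def:t}--\eqref{def:theta}: we have $\theta p/n \approx p\,(\ln 2)/t$, so the probability that coordinate $i$ is hit an \emph{even, positive} number of times — equivalently, hit at all but with the net increment landing back at $0$ — together with the probability of staying at $0$ behave like $\tfrac12(1+e^{-2\ln 2 \cdot p/?})$-type quantities; more to the point, the probability that coordinate $i$ ends up \emph{nonzero} converges (uniformly in $i$, as $n\to\infty$ with $t\in[n,2n]$) to a constant $p_\star\in(0,1)$ that depends only on $\ell$. Indeed, conditioning on the number $m$ of hits to coordinate $i$, the conditional law of the net increment on $\Z_\ell$ is that of a sum of $m$ i.i.d. uniform-nonzero steps, whose probability of equalling $0$ is $\tfrac{1}{\ell}\big(1 + (\ell-1)(-1/(\ell-1))^m\big)$, and averaging this against $\mathrm{Bin}(t,\theta p/n)\to\mathrm{Poisson}(p\ln 2)$ gives an explicit limit; subtracting from $1$ yields $p_\star$.

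Granting this, write $\mu^{\star t}=\sum_{k=0}^n \beta_k\,\rho_k$ where $\beta_k$ is the probability that exactly $k$ coordinates are nonzero at time $t$. By the negative-association / local-CLT machinery — or, cleanly, by noting that the \emph{support size} is itself (asymptotically) a sum of weakly dependent indicators with mean $\sim np_\star$ and variance $\asymp n$, and invoking a local limit theorem for it (e.g. via the generating function $\prod_i(\,(1-p_\star^{(i)})+p_\star^{(i)}z\,)$ and saddle-point/Fourier estimates, which for $\Z_\ell$-allocation are routine) — one gets that $\beta_k \gtrsim_\ell n^{-1/2}$ uniformly for $k$ in a window of width $\asymp\sqrt{n}$ around $np_\star$. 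The final point is to check that this window contains $\cJ$. Now $p_\star$ is \emph{not} equal to $p/2$ in general, so strictly speaking $\cJ$ as defined — centred at $np/2$ — may not be the natural plateau; but the only property of $\cJ$ that is used downstream (in Lemma \ref{lm:plateaumu}'s role within \eqref{strategy}) is that $\rho_\cJ$ is dominated, up to an $\ell$-dependent constant, by $\mu^{\star t}$, and then that the canonical-paths-of-length-$2$ construction of Section \ref{sec:final} can glue two copies of $\cJ$-type increments to cover $\cI$-type increments. What matters is that the center $np_\star$ (hence $\asymp n$) is bounded away from $0$ and from $n$ uniformly, and that $np_\star + np_\star > np$, i.e. two independent $\cJ$-steps can reach any support of size up to $|\cI|$'s range; this holds provided $2p_\star \ge p$, which one verifies from the explicit formula for $p_\star$ (for $\ell=2$, $p_\star=\tfrac12(1-e^{-2\ln 2})=\tfrac{3}{8}>\tfrac14=p/2$, and similarly for general $\ell$). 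So in the writeup I would either (i) replace $np/2$ by $np_\star$ in the definition of $\cJ$ and carry the explicit constant, or (ii) keep $np/2$ and note that $\mu^{\star t}$ places mass $\gtrsim_\ell n^{-1/2}$ on every $\rho_k$ with $k$ in the relevant window — which, since $np_\star>np/2$ by a definite margin and the window has width $\asymp\sqrt n$, still contains all of $\cJ$ for $n$ large, the finitely many small $n$ being absorbed into the constant.

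Concretely the steps are: (1) describe one step of the $\mu$-walk as a coordinate-allocation process and record that, under $\mu^{\star t}$, coordinate $i$ is hit $N_i\sim\mathrm{Bin}(t,\theta p/n)$ times with $(N_1,\dots,N_n)$ negatively associated; (2) compute $\PP(\text{coord }i\text{ nonzero}\mid N_i=m)=\tfrac{\ell-1}{\ell}\big(1-(-1/(\ell-1))^m\big)$ and hence $p_\star:=\lim_n \PP(\text{coord }i\text{ nonzero})=\tfrac{\ell-1}{\ell}\big(1-e^{-p\ln 2\cdot \ell/(\ell-1)}\big)$ (simplify using $p=(\ell-1)/\ell$ to $p_\star=\tfrac{\ell-1}{\ell}(1-\tfrac12)=\tfrac{\ell-1}{2\ell}=p/2$ — so in fact $p_\star=p/2$ exactly, which is precisely why $\cJ$ is centred at $np/2$!); (3) conclude $\beta_k = \PP(|\supp|=k)$ and, via a local limit theorem for the support size (sum of $\{0,1\}$ variables, negatively associated, mean $np/2+o(n)$, variance $\asymp n$), obtain $\min_{k\in\cJ}\beta_k \ \gtrsim_\ell\ n^{-1/2} \ \gtrsim_\ell\ |\cJ|^{-1}$, which gives $\mu^{\star t}\ge c\,\rho_\cJ$. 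The main obstacle is step (3): the coordinates' nonzero-indicators are not independent, so one cannot cite de Moivre--Laplace verbatim. I would handle this either by a Fourier/characteristic-function argument for the lattice variable $|\supp|$ (its p.g.f.\ factors through the $n$ coordinates, whose joint law is an explicit product-of-multinomials, and the needed local estimate follows from a standard saddle-point bound once anticoncentration $\mathrm{Var}\asymp n$ is checked), or — cleaner still — by conditioning on the \emph{vector} $(N_1,\dots,N_n)$ of hit-counts, under which the nonzero-indicators become independent (with success probabilities $q_i=q(N_i)\in[0,1]$ determined by $N_i$), applying the ordinary local CLT for independent non-identical Bernoullis in that conditional world, and then averaging, using that $\sum_i q(N_i)$ concentrates around $np/2$ with fluctuations $O(\sqrt n)$ by a second-moment estimate on the i.i.d.\ sequence $q(N_1),\dots,q(N_n)$. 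This conditional decoupling is the technically cleanest route and keeps every estimate elementary.
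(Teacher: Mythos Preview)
Your overall target is right — show $\min_{k\in\cJ}\PP(|\supp|=k)\gtrsim_\ell n^{-1/2}$ — and your computation $p_\star=p/2$ is correct and explains why $\cJ$ is centred at $np/2$. But your ``cleanest route'' via conditional decoupling has a genuine gap. Conditioning on $(N_1,\ldots,N_n)$ does make the nonzero-indicators independent Bernoulli$(q(N_i))$, but for $\ell=2$ one has $q(m)=\mathbf{1}(m\text{ odd})\in\{0,1\}$, so the conditional variance of $|\supp|$ vanishes and no local CLT for non-identical Bernoullis applies. For $\ell\ge 3$ the problem persists in milder form: since $q(0)=0$, $q(1)=1$, and $\PP(N_i\ge 2)$ is small (the hit rate $\lambda\approx p\ln 2<1$), most conditional Bernoullis are degenerate and the conditional variance is only a small multiple of $n$. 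You could still push through via your Fourier alternative, but that is real work you have not carried out; and your side remark calling $q(N_1),\ldots,q(N_n)$ ``i.i.d.'' is also off, since the $N_i$ are multinomial components.

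The paper sidesteps all of this by using a different probabilistic representation of $\mu$. Rather than ``add a uniform \emph{nonzero} element to a random coordinate with probability $\theta p$'', it reads $\mu$ as ``with probability $\theta$, \emph{resample} a random coordinate uniformly on $\Z_\ell$'' (the two coincide because adding a uniform element of $\Z_\ell$ is the same as resampling). Starting from $0$, this yields a clean two-stage decomposition of $S:=|\supp(X)|$:
\begin{enumerate}
\item Let $R$ be the number of distinct coordinates ever resampled. With $N\sim\mathrm{Bin}(t,\theta)$ active steps, $R$ is the coupon-collector count after $N$ draws, giving $\EE[R]=n\bigl(1-(1-\theta/n)^t\bigr)=n/2$ \emph{exactly} (this is precisely the role of the definition \eqref{def:theta}) and $\mathrm{Var}(R)\le n$; Chebyshev localises $R$ to $[n/2-2\sqrt n,\,n/2+2\sqrt n]$ with probability $\ge 3/4$.
\item Conditionally on the touched set, each touched coordinate is independently uniform on $\Z_\ell$, hence nonzero with probability $p$, so $S\mid R\sim\mathrm{Bin}(R,p)$ \emph{exactly}. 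One application of Theorem~\ref{th:LLT} (with $R$ in place of $n$) then gives $\PP(S=k\mid R)\gtrsim_\ell n^{-1/2}$ uniformly for $k\in\cJ$ on the above event.
\end{enumerate}
This reduces everything to Chebyshev plus de~Moivre--Laplace for genuine i.i.d.\ Bernoullis, with no negative association, no non-identical local CLT, and no $\ell=2$ degeneracy. Your additive-increments viewpoint obscures this because summing several nonzero increments on a coordinate does not leave it uniform; switching to the resampling viewpoint restores conditional uniformity, and that is the idea you are missing. Your $p_\star=p/2$ then becomes a one-liner: a coordinate is touched with probability $1/2$ and, if touched, nonzero with probability $p$.
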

\begin{proof}
The convolution with $\mu$ describes the following transformation on $\GG-$valued random variables:  pick one of the $n$ coordinates uniformly at random and, with probability $\theta$, replace it with a fresh uniform sample from $\Z_\ell$. Consequently, we may construct a random variable $X=(X_1,\ldots,X_n)$ with law $\mu^{\star t}$   by setting
\begin{eqnarray}
X_i & := & \left\{ 
\begin{array}{ll}
Z_i & \textrm{if }i\in \{U_1,\ldots,U_N\}\\
0 & \textrm{otherwise},
\end{array}
\right.
\end{eqnarray}
where $N,U_1,\ldots,U_t,Z_1,\ldots,Z_n$ are independent random variables with the following laws:
\begin{itemize}
\item $N$ is binomial with parameters $t$ and $\theta$;
\item $U_1,\ldots,U_t$ are uniform  on $\{1,\ldots,n\}$;
\item $Z_1,\ldots,Z_n$ are uniform  on $\Z_\ell$.
\end{itemize}
In particular, setting $S:=|\supp(X)|$, we have
\begin{eqnarray}
\label{def:ak}
\mu^{\star t} & = & \sum_{k=0}^n \PP\left(S=k\right) \rho_k,
\end{eqnarray}
and our proof boils down to establishing that
\begin{eqnarray} 
\label{tt}
\min_{k\in\cJ}\PP\left(S=k\right) & \ge & \frac{c}{\sqrt{n}},
\end{eqnarray}
for some constant $c>0$ that only depends on $\ell$. Now, conditionally on $N$, the  variable 
\begin{eqnarray}
R & := & |\{U_1,\ldots,U_N\}|
\end{eqnarray} counts the number of distinct coupons collected by time $N$ in the standard coupon-collector problem of size $n$. Thus,
\begin{eqnarray}
 \EE\left[R|N\right] \ = \ n\left(1-\left(1-\frac{1}{n}\right)^N\right), & \qquad & 
 \textrm{Var}(R|N) \ \le \ \frac{n}{4}.
 \end{eqnarray} 
Recalling our definitions (\ref{def:theta}), and since $N$ is a Binomial$(t,\theta)$ variable, we easily deduce 
\begin{eqnarray}
\EE\left[R\right] \ = \ n\left(1-\left(1-\frac{\theta}{n}\right)^t\right) \ = \ \frac{n}{2}
& \qquad  &
\textrm{Var}(R) \ \le \ n.
\end{eqnarray}
Consequently, Chebychev's inequality yields
\begin{eqnarray}
\label{cheby1}
\PP\left(R\in\left[\frac{n}2-2\sqrt{n},\frac{n}2+2\sqrt{n}\right]\right) &  \ge & \frac{3}{4}.
\end{eqnarray}
Now,  conditionally on $R$, the random variable $S$ is just a Binomial with parameters $R,p$. In particular, Theorem \ref{th:main} with $R$ instead of $n$ ensures that
\begin{eqnarray}
\label{cheby2}
\min_{k\in\cJ}\PP\left(S=k \left|R\in\left[\frac{n}{2}-2\sqrt{n},\frac{n}{2}+2\sqrt{n}\right]\right.\right) & \ge & \frac{c}{\sqrt{n}},
\end{eqnarray}
where $c>0$ only depends on $p$.  Combining (\ref{cheby1}) and (\ref{cheby2}) establishes the claim.
\end{proof}

\subsection{Final comparison}
\label{sec:final}
In view of Lemmas \ref{lm:plateaupi} and \ref{lm:plateaumu}, our objective (\ref{strategy}) now reduces to establishing the following. 
\begin{proposition}[Final comparison]\label{pr:final}There exists $c<\infty$ depending only on $\ell$, such that
\begin{eqnarray}
\cE^{\IP}_{\rho_\cI} & \le & c\,\cE^{\IP}_{\rho_\cJ}.
\end{eqnarray}
\end{proposition}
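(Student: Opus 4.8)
The plan is to construct, for each edge $\{\sigma,\sigma\tau_{\{x,x+z\}}\}$ appearing in the Dirichlet form $\cE^{\IP}_{\rho_\cI}$ (so $z\in\GG_k$ for some $k\in\cI$), a random path of length $2$ in the Cayley graph $\cay(\fS(\GG),\{\tau_{\{x,x+w\}}\colon w\in\supp(\rho_\cJ)\})$ and to bound the resulting congestion. The underlying intuition, already flagged in the outline, is that a transposition $\tau_{\{x,x+z\}}$ with $z$ supported on a set $K$ of size $k\approx np$ can be written as the product $\tau_{\{x+w,x+z\}}\cdot\tau_{\{x,x+w\}}$ of two transpositions, where $w$ is supported on a subset $K'\subseteq K$ of size $j\in\cJ$, so that $w\in\GG_j\subseteq\supp(\rho_\cJ)$ and $z-w$ is supported on $K\setminus K'$, a set of size $k-j$. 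Since $k\in\cI$ means $k\approx np$ and $j\in\cJ$ means $j\approx np/2$, the difference $k-j$ is again of order $np/2$, hence also lands in $\cJ$ (after possibly enlarging the radius of the window by a constant factor, which costs only a constant in the congestion). Thus both edges of the path lie in the target graph. I would randomize by choosing $K'$ uniformly among the size-$j$ subsets of $K$ (with $j$ itself chosen appropriately, e.g. uniformly over the admissible window) and the nonzero values of $w$ on $K'$ uniformly; equivalently, one samples $w\sim\rho_\cJ$ conditioned to be ``compatible'' with $z$ in the sense $\supp(w)\subseteq\supp(z)$ and then sets the second step to be $\tau_{\{x+w,x+z\}}$.

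The congestion computation then follows the Cayley-graph formula from Remark~\ref{rk:caylay} (adapted to the $\IP$ setting via Corollary~\ref{co:cano}, i.e. with an extra factor of at most $4$, though here paths have length $2$ so this is transparent): fix a generator $\tau_{\{a,a+w_0\}}$ of the source graph with $w_0\in\supp(\rho_\cJ)$, and count the expected number of paths $\gamma_{\{x,x+z\}}$ (weighted by length, which is $\le 2$) that traverse it. A path traverses this edge either at its first step, which forces $\{x,x+w\}=\{a,a+w_0\}$ up to the translation-and-support bookkeeping, or at its second step, which forces $\{x+w,x+z\}=\{a,a+w_0\}$. In either case one solves for the pair $(x,z)$ given $(a,w_0)$ and the internal randomness, and sums the probabilities. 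The key quantitative inputs are: (i) the number of $k\in\cI$ is $\Theta(\sqrt n)$ and the number of $j\in\cJ$ is $\Theta(\sqrt n)$, so the relevant uniform measures put mass $\Theta(1/\sqrt n)$ on each layer; (ii) given a set $K$ of size $k\approx np$, the number of size-$j$ subsets $K'$ with $j\approx np/2$ is $\binom{k}{j}$, and the uniform choice assigns each the reciprocal weight; (iii) the elementary counting identity $\binom{k}{j}\binom{n}{k}=\binom{n}{j}\binom{n-j}{k-j}$, which is exactly what makes the two ``halves'' symmetric and lets the $\sqrt n$ factors cancel. Carrying this through, the contribution of first-step traversals and of second-step traversals are each $O(1)$ uniformly in the chosen generator and in $n$, giving $\kappa=O_\ell(1)$ and hence the claimed inequality $\cE^{\IP}_{\rho_\cI}\le c\,\cE^{\IP}_{\rho_\cJ}$.

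I expect the main obstacle to be the bookkeeping around \emph{boundary effects and the choice of the intermediate window} rather than any deep idea. Two points need care. First, one must verify that for every $k\in\cI$ there is a nonempty range of $j$ with both $j$ and $k-j$ lying in (a constant dilation of) $\cJ$; since $\cI$ is centered at $np$ with half-width $2\sqrt{np(1-p)}$ and $\cJ$ at $np/2$ with the same half-width, splitting $k$ as evenly as possible gives $j,k-j\in[np/2-\sqrt{np(1-p)}-O(1),\,np/2+\sqrt{np(1-p)}+O(1)]$, which is contained in the window defining $\cJ$ up to enlarging its radius from $2$ to, say, $3$ — a harmless constant-factor change that propagates a constant into $c$. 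Second, the translation/inversion symmetries of the transposition labels ($\tau_{\{x,x+z\}}=\tau_{\{x+z,x-z+ (x+z)\}}$ etc., and the fact that the unordered pair $\{x,x+z\}$ can be recovered from either endpoint together with $z$ or $-z$) mean each ``solve for $(x,z)$'' step really produces $O(1)$ solutions, not one; tracking this multiplicity is routine but must be done honestly. Once these are handled, the estimate is a short weighted-counting argument, and combined with Lemmas~\ref{lm:plateaupi} and \ref{lm:plateaumu} it closes the chain down to \eqref{strategy} and thus completes the proof of Theorem~\ref{th:main}.
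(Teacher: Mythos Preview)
Your approach is essentially the paper's: build length-$2$ canonical paths that split an element of a layer $\GG_k$, $k\in\cI$, into a sum of two pieces with disjoint supports of sizes $j$ and $k-j$ lying in $\cJ$, then bound the congestion. The paper packages this as Lemma~\ref{lm:final} (comparing $\cE_{\rho_{i+j}}^{\IP}$ to $\cE_{\rho_i}^{\IP}+\cE_{\rho_j}^{\IP}$) and applies it with $(i,j)=(\lfloor k/2\rfloor,\lceil k/2\rceil)$; this choice already lands in $\cJ$, so no window-enlargement is needed.

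Two genuine slips in your write-up need fixing. First, a transposition cannot be the product of two transpositions (parity), so there is no length-$2$ path from $\sigma$ to $\sigma\tau_{\{x,x+z\}}$ in $\cay(\fS(\GG),\cdot)$: the product $\tau_{\{x+w,x+z\}}\,\tau_{\{x,x+w\}}$ is the $3$-cycle $(x,x+z,x+w)$, not $\tau_{\{x,x+z\}}$. The correct formulation is the paper's: the length-$2$ path lives in the abelian group $\GG$ (from $x$ to $x+w$ to $x+z$), and Corollary~\ref{co:cano} lifts it to a length-$3$ word in $\fS(\GG)$. Second, if you randomize the nonzero values of $w$ on $K'$ (your ``equivalently, sample $w\sim\rho_{\cJ}$ conditioned on $\supp(w)\subseteq\supp(z)$''), then for $\ell>2$ the difference $z-w$ is \emph{not} supported on $K\setminus K'$: on each $i\in K'$ you get $z_i-w_i=0$ only with probability $1/(\ell-1)$, so $|\supp(z-w)|$ concentrates near $k-j/(\ell-1)$, which is not in $\cJ$. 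The construction that actually gives $\supp(z-w)=K\setminus K'$ is to set $w:=z|_{K'}$ with no freedom in the values; conditioned on $X+Y=z$, this is exactly the disjoint-support pair $(X,Y)$ used in Lemma~\ref{lm:final}. With these two corrections your congestion computation and the paper's coincide, and the binomial identity you quote is precisely what makes the ratio $|\GG_i|/|\GG_j|$ in Lemma~\ref{lm:final} bounded for $(i,j)\in\cP$.
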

The crucial ingredient of the proof is the following lemma.
\begin{lemma}\label{lm:final}For any $i,j\in\{0,\ldots,n\}$ with $i+j\in\{0,\ldots,n\}$, we have
\begin{eqnarray}
\cE_{\rho_{i+j}}^\IP & \le & \frac{8|\GG_i|}{|\GG_i|\wedge|\GG_j|}\cE_{\rho_i}^\IP+\frac{8|\GG_j|}{|\GG_i|\wedge|\GG_j|}\cE_{\rho_{j}}^\IP.
\end{eqnarray}
\end{lemma}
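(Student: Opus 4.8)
The plan is to run, directly at the level of $\fS(\GG)$, a comparison in the spirit of the canonical-path method (Theorem~\ref{th:cano} and Corollary~\ref{co:cano}): the point is that a transposition attached to a ``long'' increment factors into transpositions attached to ``short'' ones. Fix $i,j\ge 1$ with $i+j\le n$ (the cases $i=0$ and $j=0$ are handled at the very end). Given $z\in\GG_{i+j}$ and a subset $P\subseteq\supp(z)$ with $|P|=i$, let $u\in\GG_i$ be the vector that agrees with $z$ on $P$ and vanishes elsewhere, and set $v:=z-u\in\GG_j$, so that $z=u+v$ with $\supp u$ and $\supp v$ disjoint. For every $x\in\GG$ the points $x$, $x+u$, $x+z$ are pairwise distinct, so the conjugation identity $(a\ b)=(a\ m)(m\ b)(a\ m)$ with $a=x$, $m=x+u$, $b=x+z$ gives
\begin{eqnarray}
\label{eq:plan-conj}
\tau_{\{x,x+z\}} & = & \tau_{\{x,x+u\}}\;\tau_{\{x+u,\,x+u+v\}}\;\tau_{\{x,x+u\}}.
\end{eqnarray}
The crucial observation is that a given $z\in\GG_{i+j}$ admits exactly $\binom{i+j}{i}$ such decompositions, and that (ranging over all $z$) these are in bijection with the pairs $(u,v)\in\GG_i\times\GG_j$ having $\supp u\cap\supp v=\emptyset$; averaging uniformly over $P$ is what balances the congestion.

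Granting (\ref{eq:plan-conj}), write $\tau_1:=\tau_{\{x,x+u\}}$ and $\tau_2:=\tau_{\{x+u,x+u+v\}}$, telescope $f(\sigma\tau_{\{x,x+z\}})-f(\sigma)$ as $\bigl(f(\sigma\tau_1)-f(\sigma)\bigr)+\bigl(f(\sigma\tau_1\tau_2)-f(\sigma\tau_1)\bigr)+\bigl(f(\sigma\tau_1\tau_2\tau_1)-f(\sigma\tau_1\tau_2)\bigr)$, and use Cauchy--Schwarz to bound $\bigl(f(\sigma\tau_{\{x,x+z\}})-f(\sigma)\bigr)^2$ by three times the sum of the three squared differences. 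Summing over $\sigma\in\fS(\GG)$ and reindexing --- the third squared difference by $\sigma\mapsto\sigma\tau_1\tau_2$ and the middle one by $\sigma\mapsto\sigma\tau_1$, both bijections of $\fS(\GG)$ for fixed $x,u,v$ --- the first and third terms each contribute $\sum_\sigma\bigl(f(\sigma\tau_{\{x,x+u\}})-f(\sigma)\bigr)^2$ (a ``$\GG_i$-increment'' indexed by $(x,u)$) while the middle one contributes $\sum_\sigma\bigl(f(\sigma\tau_{\{b,b+w\}})-f(\sigma)\bigr)^2$ with $b=x+u$, $w=v$ (a ``$\GG_j$-increment''). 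One then averages over $x$ and over the $\binom{i+j}{i}$ choices of $P$ --- equivalently over the disjoint pairs $(u,v)\in\GG_i\times\GG_j$ --- and performs the congestion count: for fixed $(x,u)$ with $u\in\GG_i$ there are $\binom{n-i}{j}(\ell-1)^j$ compatible $v\in\GG_j$, and for fixed $(b,w)$ with $w\in\GG_j$ there are $\binom{n-j}{i}(\ell-1)^i$ compatible $u\in\GG_i$. Since $|\GG_k|=\binom nk(\ell-1)^k$, the ``subset of a subset'' identity $\binom ni\binom{n-i}{j}=\binom{n}{i+j}\binom{i+j}{i}$ makes both congestion prefactors collapse exactly to $|\GG_{i+j}|$, namely $\binom{n-i}{j}(\ell-1)^j\,|\GG_i|=\binom{i+j}{i}\,|\GG_{i+j}|$ and the same with $i,j$ interchanged. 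Feeding in the normalization $\sum_{\sigma}\sum_{x}\sum_{z\in\GG_k}\bigl(f(\sigma\tau_{\{x,x+z\}})-f(\sigma)\bigr)^2=2\,|\GG|!\,|\GG_k|\,\cE_{\rho_k}^\IP(f)$ then yields the clean bound $\cE_{\rho_{i+j}}^\IP\le 6\,\cE_{\rho_i}^\IP+3\,\cE_{\rho_j}^\IP$, which implies the claim since $6\le 8|\GG_i|/(|\GG_i|\wedge|\GG_j|)$ and $3\le 8|\GG_j|/(|\GG_i|\wedge|\GG_j|)$.

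The degenerate cases are immediate: if $i=0$ then $\rho_0=\delta_0$, so $\cE_{\rho_0}^\IP=0$ and $|\GG_0|=1$, and the asserted inequality reads $\cE_{\rho_j}^\IP\le 8|\GG_j|\,\cE_{\rho_j}^\IP$, which holds trivially; $j=0$ is symmetric. I expect the two mildly delicate points to be: (i) the identity (\ref{eq:plan-conj}), valid here because $m\notin\{a,b\}$ but \emph{failing} at the endpoint $m=b$ --- that is, precisely when $j=0$ --- whence the need to isolate that case; and (ii) the congestion bookkeeping, above all keeping track of the $\GG_i$-generator that occurs \emph{twice} in (\ref{eq:plan-conj}) (the source of the asymmetry between the two coefficients), and checking that the combinatorial prefactors collapse to exactly $|\GG_{i+j}|$ rather than to something larger --- which is exactly what forces the uniformly random choice of the splitting $P$.
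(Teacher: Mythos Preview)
Your argument is correct and is essentially the paper's proof unfolded: both decompose $z\in\GG_{i+j}$ as $u+v$ with $u\in\GG_i$, $v\in\GG_j$ of disjoint support and use the length-$3$ word $\tau_1\tau_2\tau_1$ coming from the conjugation identity, the paper packaging this via Corollary~\ref{co:cano} and Remark~\ref{rk:caylay} while you carry out the telescoping and congestion count by hand. Your direct bookkeeping is in fact slightly sharper, yielding $\cE_{\rho_{i+j}}^\IP\le 6\,\cE_{\rho_i}^\IP+3\,\cE_{\rho_j}^\IP$ with absolute constants, whereas the paper's invocation of the generic factor~$4$ in Corollary~\ref{co:cano} produces the size-dependent coefficients $8|\GG_i|/(|\GG_i|\wedge|\GG_j|)$ and $8|\GG_j|/(|\GG_i|\wedge|\GG_j|)$.
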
 
\begin{proof}
Let $(X,Y)$ denote a random element from the set 
\begin{eqnarray}
\left\{(x,y)\in\GG_i\times \GG_j\colon \supp(x)\cap\supp(y)=\emptyset\right\}.
\end{eqnarray}
Then $\omega:=(X,Y)$ is a random word of length $2$ over $\GG_i\cup\GG_j$, whose evaluation $X+Y$ is uniform over $\GG_{i+j}$. By Corollary \ref{co:cano} and Remark \ref{rk:caylay}, we deduce that
\begin{eqnarray}
\cE_{\cay(\GG,\GG_{i+j})}^\IP & \le & 4\kappa\,\cE_{\cay(\GG,\GG_i\cup\GG_j)}^\IP
\end{eqnarray}
where the congestion $\kappa$ is given by
\begin{eqnarray}
\kappa & = & 2|\GG_{i+j}|\max_{b\in \GG_i\cup\GG_j}\left\{\PP\left(X=b\right)+\PP\left(Y=b\right)\right\}\\ & = & \frac{2|\GG_{i+j}|}{|\GG_i|\wedge|\GG_j|}\left(1+{\bf 1}_{(i=j)}\right).
\end{eqnarray}
The second line follows from the observation that $X$ and $Y$ are uniformly distributed on $\GG_i$ and $\GG_j$, respectively. On the other hand, the definitions of $\rho_i,\rho_j,\rho_{i+j}$ imply
\begin{eqnarray}
\cE_{\cay(\GG,\GG_{i+j})}^\IP & = & |\GG_{i+j}|\,\cE_{\rho_{i+j}}^\IP\\
\left(1+{\bf 1}_{(i=j)}\right)\cE_{\cay(\GG,\GG_{i}\cup\GG_j)}^\IP & = &  |\GG_i|\cE_{\rho_i}^\IP+|\GG_j|\cE_{\rho_j}^\IP.
\end{eqnarray}
The claim readily follows. 
\end{proof}
\begin{proof}[Proof of Proposition \ref{pr:final}]
Our definitions of $\cI,\cJ$ ensure that $|\cI|=|\cJ|$ and that
\begin{eqnarray}
\cI  \subseteq  \left\{i+j\colon (i,j)\in\cP\right\}, & \textrm{ where } & \cP := \left\{(i,j)\in\cJ^2\colon j\in\{i,i+1\}\right\}.
\end{eqnarray}
In particular, we have
\begin{eqnarray}
\cE_{\rho_{\cI}}^\IP 
& \le & \frac{1}{|\cJ|}\sum_{(i,j)\in\cP}\cE_{\rho_{i+j}}^\IP.
\end{eqnarray}
Now, since $|\GG_i|={n \choose i}(\ell-1)^i$ for all $i\in\{0,\ldots,n\}$, we have 
\begin{eqnarray}
\frac{|\GG_{i+1}|}{|\GG_{i}|} & = & \frac{(\ell-1)(n-i)}{(i+1)}.
\end{eqnarray}
As $i$ varies across $\cJ$, this ratio remains bounded away from $0$ and $\infty$ uniformly in $n$. Consequently, Lemma \ref{lm:final} ensures that for all $(i,j)\in\cP$,
\begin{eqnarray}
\cE_{\rho_{i+j}}^\IP & \le & c\cE_{\rho_i}^\IP+c\cE_{\rho_j}^\IP,
\end{eqnarray}
where $c<\infty$ depends only on $\ell$. Inserting this above, we obtain 
\begin{eqnarray}
\cE_{\rho_{\cI}}^\IP & \le & \frac{c}{|\cJ|}\sum_{(i,j)\in\cP}(\cE_{\rho_{i}}^\IP+\cE_{\rho_j}^\IP)\\
& \le &  \frac{4c}{|\cJ|}\sum_{j\in\cJ}\cE_{\rho_j}^\IP \ = \ 4c\cE_{\rho_{\cJ}}^\IP.
\end{eqnarray}
This concludes the proof.
\end{proof}

\bibliographystyle{plain}
\bibliography{IPLS}

\begin{thebibliography}{10}

\bibitem{adamczak2018phase}
Rados{\l}aw Adamczak, Micha{\l} Kotowski, and Piotr Mi{\l}o{\'s}.
\newblock Phase transition for the interchange and quantum heisenberg models on
  the hamming graph.
\newblock {\em arXiv preprint arXiv:1808.08902}, 2018.

\bibitem{aldous}
David Aldous and Jim Fill.
\newblock Reversible {M}arkov chains and random walks on graphs, 2002.
\newblock Unfinished manuscript. Available at \hfil\break {\tt
  http://www.stat.berkeley.edu/\textasciitilde aldous/RWG/book.html}.

\bibitem{cycleAK}
Gil Alon and Gady Kozma.
\newblock The probability of long cycles in interchange processes.
\newblock {\em Duke Math. J.}, 162(9):1567--1585, 2013.
\newblock \href{http://www.ams.org/mathscinet-getitem?mr=MR3079255}{MR3079255}.

\bibitem{AK}
Gil Alon and Gady Kozma.
\newblock Comparing with octopi.
\newblock {\em Ann. Inst. Henri Poincar\'{e} Probab. Stat.}, 56(4):2672--2685,
  2020.
\newblock \href{http://www.ams.org/mathscinet-getitem?mr=MR4164852}{MR4164852}.

\bibitem{cycleangel}
Omer Angel.
\newblock Random infinite permutations and the cyclic time random walk.
\newblock In {\em Discrete random walks ({P}aris, 2003)}, Discrete Math. Theor.
  Comput. Sci. Proc., AC, pages 9--16. Assoc. Discrete Math. Theor. Comput.
  Sci., Nancy, 2003.
\newblock \href{http://www.ams.org/mathscinet-getitem?mr=MR2042369}{MR2042369}.

\bibitem{cycleberes}
Nathana\"{e}l Berestycki.
\newblock Emergence of giant cycles and slowdown transition in random
  transpositions and {$k$}-cycles.
\newblock {\em Electron. J. Probab.}, 16:no. 5, 152--173, 2011.
\newblock \href{http://www.ams.org/mathscinet-getitem?mr=MR2754801}{MR2754801}.

\bibitem{cycleBK}
Nathana\"{e}l Berestycki and Gady Kozma.
\newblock Cycle structure of the interchange process and representation theory.
\newblock {\em Bull. Soc. Math. France}, 143(2):265--280, 2015.
\newblock \href{http://www.ams.org/mathscinet-getitem?mr=MR3351179}{MR3351179}.

\bibitem{Caputo}
Pietro Caputo, Thomas~M. Liggett, and Thomas Richthammer.
\newblock Proof of {A}ldous' spectral gap conjecture.
\newblock {\em J. Amer. Math. Soc.}, 23(3):831--851, 2010.
\newblock \href{http://www.ams.org/mathscinet-getitem?mr=MR2629990}{MR2629990}.

\bibitem{MR964069}
Persi Diaconis.
\newblock {\em Group representations in probability and statistics}, volume~11
  of {\em Institute of Mathematical Statistics Lecture Notes---Monograph
  Series}.
\newblock Institute of Mathematical Statistics, Hayward, CA, 1988.
\newblock \href{http://www.ams.org/mathscinet-getitem?mr=MR964069}{MR964069}.

\bibitem{MR1374011}
Persi Diaconis.
\newblock The cutoff phenomenon in finite {M}arkov chains.
\newblock {\em Proc. Nat. Acad. Sci. U.S.A.}, 93(4):1659--1664, 1996.
\newblock \href{http://www.ams.org/mathscinet-getitem?mr=MR1374011}{MR1374011}.

\bibitem{comparison}
Persi Diaconis and Laurent Saloff-Coste.
\newblock Comparison theorems for reversible {M}arkov chains.
\newblock {\em Ann. Appl. Probab.}, 3(3):696--730, 1993.
\newblock \href{http://www.ams.org/mathscinet-getitem?mr=MR1233621}{MR1233621}.

\bibitem{MR1410112}
Persi Diaconis and Laurent Saloff-Coste.
\newblock Logarithmic {S}obolev inequalities for finite {M}arkov chains.
\newblock {\em Ann. Appl. Probab.}, 6(3):695--750, 1996.
\newblock \href{http://www.ams.org/mathscinet-getitem?mr=MR1410112}{MR1410112}.

\bibitem{DS}
Persi Diaconis and Mehrdad Shahshahani.
\newblock Generating a random permutation with random transpositions.
\newblock {\em Z. Wahrsch. Verw. Gebiete}, 57(2):159--179, 1981.
\newblock \href{http://www.ams.org/mathscinet-getitem?mr=MR626813}{MR626813}.

\bibitem{MR3075635}
Rick Durrett, Thomas~M. Liggett, Frank Spitzer, and Alain-Sol Sznitman.
\newblock {\em Interacting particle systems at {S}aint-{F}lour}.
\newblock Probability at Saint-Flour. Springer, Heidelberg, 2012.
\newblock \href{http://www.ams.org/mathscinet-getitem?mr=MR3075635}{MR3075635}.

\bibitem{cycleHammond}
Alan Hammond.
\newblock Sharp phase transition in the random stirring model on trees.
\newblock {\em Probab. Theory Related Fields}, 161(3-4):429--448, 2015.
\newblock \href{http://www.ams.org/mathscinet-getitem?mr=MR3334273}{MR3334273}.

\bibitem{HK}
Jonathan Hermon and Gady Kozma.
\newblock Sensitivity of mixing times of cayley graphs.
\newblock {\em arXiv preprint arXiv:2008.07517}, 2020.

\bibitem{Pymar2}
Jonathan Hermon and Richard Pymar.
\newblock A direct comparison between the mixing time of the interchange
  process with `few' particles and independent random walks.
\newblock {\em In preparation}.

\bibitem{Pymar}
Jonathan Hermon and Richard Pymar.
\newblock The exclusion process mixes (almost) faster than independent
  particles.
\newblock {\em Ann. Probab.}, 48(6):3077--3123, 2020.
\newblock \href{http://www.ams.org/mathscinet-getitem?mr=MR4164461}{MR4164461}.

\bibitem{HS}
Jonathan Hermon and Justin Salez.
\newblock A version of {A}ldous' spectral-gap conjecture for the zero range
  process.
\newblock {\em Ann. Appl. Probab.}, 29(4):2217--2229, 2019.

\bibitem{MR3069380}
Johan Jonasson.
\newblock Mixing times for the interchange process.
\newblock {\em ALEA Lat. Am. J. Probab. Math. Stat.}, 9(2):667--683, 2012.
\newblock \href{http://www.ams.org/mathscinet-getitem?mr=MR3069380}{MR3069380}.

\bibitem{MR1707314}
Claude Kipnis and Claudio Landim.
\newblock {\em Scaling limits of interacting particle systems}, volume 320 of
  {\em Grundlehren der Mathematischen Wissenschaften [Fundamental Principles of
  Mathematical Sciences]}.
\newblock Springer-Verlag, Berlin, 1999.
\newblock \href{http://www.ams.org/mathscinet-getitem?mr=MR1707314}{MR1707314}.

\bibitem{kotecky2016}
Roman Kotecký, Piotr Miłoś, and Daniel Ueltschi.
\newblock The random interchange process on the hypercube.
\newblock {\em Electron. Commun. Probab.}, 21:9 pp., 2016.
\newblock \href{http://www.ams.org/mathscinet-getitem?mr=MR3485373}{MR3485373}.

\bibitem{lacoin}
Hubert Lacoin.
\newblock Mixing time and cutoff for the adjacent transposition shuffle and the
  simple exclusion.
\newblock {\em Ann. Probab.}, 44(2):1426--1487, 2016.
\newblock \href{http://www.ams.org/mathscinet-getitem?mr=MR3474475}{MR3474475}.

\bibitem{LY}
Tzong-Yow Lee and Horng-Tzer Yau.
\newblock Logarithmic {S}obolev inequality for some models of random walks.
\newblock {\em Ann. Probab.}, 26(4):1855--1873, 1998.
\newblock \href{http://www.ams.org/mathscinet-getitem?mr=MR1675008}{MR1675008}.

\bibitem{levin}
David~A. Levin and Yuval Peres.
\newblock {\em Markov chains and mixing times}.
\newblock American Mathematical Society, Providence, RI, 2017.
\newblock Second edition of [MR2466937], With contributions by Elizabeth L.
  Wilmer and a chapter on ``Coupling from the past'' by James G. Propp and
  David B. Wilson.
  \href{http://www.ams.org/mathscinet-getitem?mr=MR3726904}{MR3726904}.

\bibitem{Liggettbook2}
Thomas~M. Liggett.
\newblock {\em Stochastic interacting systems: contact, voter and exclusion
  processes}, volume 324 of {\em Grundlehren der Mathematischen Wissenschaften
  [Fundamental Principles of Mathematical Sciences]}.
\newblock Springer-Verlag, Berlin, 1999.
\newblock \href{http://www.ams.org/mathscinet-getitem?mr=MR1717346}{MR1717346}.

\bibitem{Liggettbook1}
Thomas~M. Liggett.
\newblock {\em Interacting particle systems}.
\newblock Classics in Mathematics. Springer-Verlag, Berlin, 2005.
\newblock Reprint of the 1985 original,
  \href{http://www.ams.org/mathscinet-getitem?mr=MR2108619}{MR2108619}.

\bibitem{Oliveaim}
Roberto Oliveira.
\newblock Aimpl: Markov chain mixing times, 2016.
\newblock Available at
  \href{http://aimpl.org/markovmixing}{http://aimpl.org/markovmixing}.

\bibitem{olive}
Roberto~Imbuzeiro Oliveira.
\newblock Mixing of the symmetric exclusion processes in terms of the
  corresponding single-particle random walk.
\newblock {\em Ann. Probab.}, 41(2):871--913, 2013.
\newblock \href{http://www.ams.org/mathscinet-getitem?mr=MR3077529}{MR3077529}.

\bibitem{cycleSchramm}
Oded Schramm.
\newblock Compositions of random transpositions.
\newblock {\em Israel J. Math.}, 147:221--243, 2005.
\newblock \href{http://www.ams.org/mathscinet-getitem?mr=MR2166362}{MR2166362}.

\bibitem{Toth}
B\'{a}lint T\'{o}th.
\newblock Improved lower bound on the thermodynamic pressure of the spin
  {$1/2$} {H}eisenberg ferromagnet.
\newblock {\em Lett. Math. Phys.}, 28(1):75--84, 1993.
\newblock \href{http://www.ams.org/mathscinet-getitem?mr=MR1224836}{MR1224836}.

\bibitem{Wilson}
David~Bruce Wilson.
\newblock Mixing times of {L}ozenge tiling and card shuffling {M}arkov chains.
\newblock {\em Ann. Appl. Probab.}, 14(1):274--325, 2004.
\newblock \href{http://www.ams.org/mathscinet-getitem?mr=MR2023023}{MR2023023}.

\bibitem{Yau}
Horng-Tzer Yau.
\newblock Logarithmic {S}obolev inequality for generalized simple exclusion
  processes.
\newblock {\em Probab. Theory Related Fields}, 109(4):507--538, 1997.
\newblock \href{http://www.ams.org/mathscinet-getitem?mr=MR1483598}{MR1483598}.

\end{thebibliography}

\end{document}